\def\MR#1#2{\href{http://www.ams.org/mathscinet-getitem?mr=#1}{MR#1 #2}}
\newcommand{\sC}{\mathcal{C}}
\newcommand{\C}{\mathbb{C}}
\newcommand{\R}{\mathbb{R}}
\newcommand{\Q}{\mathbb{Q}}
\newcommand{\Z}{\mathbb{Z}}
\newcommand{\Ghat}{\widehat{G}}
\newcommand{\Hhat}{\widehat{H}}
\newcommand{\st}{\,|\,}
\newcommand{\sP}{\mathcal{P}}
\newcommand{\sQ}{\mathcal{Q}}
\theoremstyle{plain}
\newtheorem{theorem}{Theorem}
\newtheorem{corollary}[theorem]{Corollary}
\newtheorem{lemma}[theorem]{Lemma}
\newtheorem{proposition}[theorem]{Proposition}
\numberwithin{theorem}{section}
\numberwithin{equation}{section}
\theoremstyle{definition}
\newtheorem{definition}[theorem]{Definition}
\newtheorem{remark}[theorem]{Remark}
\DeclareMathOperator{\GL}{GL}
\DeclareMathOperator{\Hom}{Hom}
\DeclareMathOperator{\covol}{covol}
\DeclareMathOperator{\vol}{vol}
\begin{document}

\title[Formal duality and generalizations of Poisson summation]
{Formal duality and generalizations of the\\ Poisson summation formula}

\author{Henry Cohn}
\address{Microsoft Research New England\\
One Memorial Drive\\
Cambridge, Massachusetts 02142} \email{cohn@microsoft.com}

\author{Abhinav Kumar}
\address{Department of Mathematics\\
Massachusetts Institute of Technology\\
Cambridge, Massachusetts 02139} \email{abhinav@math.mit.edu}

\author{Christian Reiher}
\address{Mathematisches Seminar der Universit\"at Hamburg\\
Bundesstr.\ 55\\
D-20146 Hamburg, Germany}
\email{Christian.Reiher@uni-hamburg.de}

\author{Achill Sch\"urmann}
\address{Institute for Mathematics\\
University of Rostock\\
18051 Rostock, Germany}
\email{achill.schuermann@uni-rostock.de}

\thanks{The first author was supported in part by National Science Foundation grants DMS-0757765
and DMS-0952486 and by a grant from the Solomon Buchsbaum Research Fund.}

\subjclass[2010]{Primary 05B40, 11H31; Secondary 52C17}

\begin{abstract}
We study the notion of formal duality introduced by Cohn, Kumar, and
Sch\"urmann in their computational study of energy-minimizing particle
configurations in Euclidean space.  In particular, using the Poisson
summation formula we reformulate formal duality as a combinatorial
phenomenon in finite abelian groups.  We give new examples related to Gauss
sums and make some progress towards classifying formally dual
configurations. \vskip -0.75cm \ % ad hoc space to fix pagination
\end{abstract}

\maketitle

\section{Introduction}

The Poisson summation formula connects the sum of a function
over a lattice $\Lambda \subset \R^n$ with the sum of its
Fourier transform over the dual lattice $\Lambda^*$; recall
that $\Lambda^*$ is spanned by the dual basis (with respect to
the inner product) to any basis of $\Lambda$.  In fact, Poisson
summation completely characterizes the notion of duality for
lattices.  In a computational study of energy minimization for
particle configurations, Cohn, Kumar, and Sch\"urmann
\cite{CKS} found several examples of non-lattice configurations
exhibiting a similar \emph{formal duality} with respect to a
version of Poisson summation.  In this paper, we place these
examples in a broader context, produce new examples using the
theory of Gauss sums, and take the first steps towards a
classification of formally dual configurations.

Energy minimization is a natural problem in geometric optimization, which
generalizes the sphere packing problem of arranging congruent,
non-overlapping spheres as densely as possible in $\R^n$. The \emph{energy}
$E_f(\sC)$ of a configuration $\sC \subset \R^n$ with respect to a radial
potential function $f \colon \R_{>0} \to \R$ is defined to be the average
over $x \in \sC$ of the energy of $x$, which is
\[
E_f(x, \sC) = \sum_{y \in \sC\setminus\{x\}} f(|x - y|).
\]
Of course these sums might diverge or the average over $x$ might not be well
defined.  We therefore restrict $\sC$ to be a periodic configuration, i.e.,
the union of finitely many translates of a lattice in $\R^n$, and
we consider only potential functions that decrease rapidly enough at infinity
to ensure convergence.  See Section~9 of \cite{CK1} for more details.

For each potential function $f$, the energy minimization problem asks for the
configuration $\sC$ that minimizes $E_f(\sC)$ subject to fixing the point
density $\delta(\sC)$ (i.e., the number of points per unit volume).

In \cite{CKS}, the authors undertook an experimental study of energy minima
in low dimensions for Gaussian potential functions.  This is the Gaussian
core model from mathematical physics \cite{S}, and Gaussian potential
functions also play a key role in the mathematical theory of universal
optimality \cite{CK1}, because they span the cone of completely monotonic
functions of squared distance.  (If a configuration minimizes all Gaussian
potentials simultaneously, then it minimizes many others as well, such as
inverse power laws.)  For the potential function $G_c(r) := \exp(-\pi c
r^2)$, as $c \to \infty$ the potential energy for each point is dominated by
the contribution from its nearest neighbors.  In the limit, minimizing the
energy requires maximizing the distance between the nearest neighbors and
thus maximizing the density of the corresponding sphere packing. We can
therefore view energy minimization with $c$ large as a ``soft-matter''
version of sphere packing, in which small distances between particles are
allowed but heavily penalized, and we recover the hard sphere model in the
limit as $c \to \infty$.

Maximizing density is a necessary condition for optimality as $c \to \infty$,
but it is not sufficient, since two optimal sphere packings needn't have the
same energy.  For example, one may contain fewer pairs of nearest neighbors,
in which case it will have lower energy when $c$ is large.  As shown in
\cite{CK2}, the densest lattice packing in $\R^n$ fails to minimize energy
for large $c$ when $n=5$ or $n=7$.  Further results were obtained in
\cite{CKS}, which reported on numerical searches for energy minima among
periodic configurations with $1 \le n \le 9$ and a range of values of $c$.
(The results in \cite{CKS} are formulated in terms of a fixed potential
function and varying particle density, but that is equivalent to our
perspective here under rescaling to fix the density.)

The most noteworthy finding from \cite{CKS} was that in each dimension, the
energy-minimizing structures for the potential functions $G_c$ and $G_{1/c}$
seem to be \emph{formally dual} (except in certain narrow ranges of phase coexistence).
Formal duality generalizes the more
familiar notion of duality for lattices.  We will recall the definition in
Section~\ref{defns}; the key property is that if $\sP$ and $\sQ$ are formal
duals, then formal duality relates the $f$-potential energy of $\sP$ to the
$\widehat{f}$-potential energy of $\sQ$ for all potential functions $f$,
where $\widehat{f}$ is the Fourier transform of $f$. Note that $G_c$ and
$G_{1/c}$ are Fourier transforms of each other, up to scalar multiplication.

To describe the simulation results from \cite{CKS}, we will need some
notation. Let $D_n^+$ be the periodic configuration consisting of the union
of the checkerboard lattice
\[
D_n = \{ (x_1, \dots, x_n) \in \Z^n \st x_1+\dots+x_n \equiv 0 \pmod 2 \}
\]
and its translate by the all-halves vector (note that $D_n^+$ is actually a
lattice if $n$ is even), and for $\alpha >0$ let
\[
D_n^+(\alpha) = \{ (x_1, \dots, x_{n-1}, \alpha x_n) \st (x_1,\dots,x_n) \in D_n^+\}
\]
be obtained by scaling the last coordinate.

The numerical experiments in
\cite{CKS} indicate that in dimension $5$, the family of configurations
$D_5^+(\alpha)$ minimize the $G_c$-energy, with $\alpha$ some function of the
parameter $c$, except in a small interval around $c=1$ (in this interval there
is phase coexistence and the optimal configuration is probably not periodic). For instance,
as $c \to \infty$, the minima seem to approach
$D_5^+(2)$, which is the tight packing $\Lambda_5^2$ in the notation of
\cite{CS1}. Similarly, in dimension $7$ the $D_7^+(\alpha)$ family seems to
be optimal.  In three cases there are single configurations that seem
to minimize potential energy for the entire family of Gaussian potential
functions: $D_4$ in dimensional $4$, $E_8$ in dimension $8$ (consistent with the conjecture of
universal optimality from \cite{CK1}), and $D_9^+$ in dimension 9.
In dimension $6$, the energy minima are
experimentally seen to be $E_6$ and its dual for $c \to \infty$ and $c \to 0$,
respectively; around the central point $c = 1$ experiments yield the
following periodic configuration $\sP_6(\alpha)$, where $\alpha$ depends on
$c$. Let $\sP_6$ be the lattice $D_3 \oplus D_3$, along with its translates
by the three vectors
$v_1 = ( -1/2, -1/2, -1/2, 1,1,1 )$,
$v_2 =  ( 1,1,1,-1/2, -1/2, -1/2 )$,
and $v_3 = v_1 + v_2$.
Then $\sP_6(\alpha)$ is obtained from $\sP_6$ by scaling the first
three coordinates by $\alpha$ and the last three by $1/\alpha$.

Whether or not these families are the true global minima, they certainly
exhibit the phenomenon of formal duality. Namely, $D_n^+(\alpha)$ is formally
dual to an isometric copy of $D_n^+(1/\alpha)$, and $\sP_6(\alpha)$ is
formally dual to an isometric copy of itself.  See Section~VI of \cite{CKS}
for a proof for $D_n^+(\alpha)$ and a sketch of the analogous proof for
$\sP_6(\alpha)$.  Formal duality comes as a surprise, because most
configurations do not have formal duals at all.  The experimental findings
lead to a natural question: do the global minima for Gaussian potential
energy in Euclidean space always appear in families exhibiting formal
duality?  Outside of certain narrow ranges for the parameter $c$, where one
observes phase coexistence leading to aperiodic minima, all the numerical
data from \cite{CKS} is consistent with formal duality.

The structures found in \cite{CKS} have been the only known examples of
formally dual pairs other than lattices.  In this paper, we present a new
family of examples based on Gauss sums, we analyze structural properties of
formally dual configurations, and we take the first steps towards a
classification.

\section{Poisson summation formulas and duality} \label{defns}

We first recall the Poisson summation formula. Given a well-behaved function
$f \colon \R^n \to \R$ (for example, a Schwartz function, though much weaker
hypotheses will suffice), define its Fourier transform $\widehat{f} \colon \R^n \to \R$ by
\[
\widehat{f}(y) = \int_{\R^n} f(x) e^{-2\pi i \langle x, y \rangle } \, dx.
\]
Then the Poisson summation formula states that for a lattice $\Lambda \subset
\R^n$,
\[
\sum_{x \in \Lambda} f(x) = \frac{1}{\covol(\Lambda)}\sum_{y \in \Lambda^*} \widehat{f}(y),
\]
where
\[
\Lambda^* =  \{ y \in \R^n \st \langle x,y
\rangle \in \Z \textup{ for all $x \in \Lambda$}\}
\]
is the dual lattice and $\covol(\Lambda) = \vol(\R^n/\Lambda)$ is the volume
of a fundamental domain of $\Lambda$. The Poisson summation formula is a
useful identity in many areas of mathematics. For instance, it can be used to
prove analytic continuation and the functional equation for the Riemann zeta
function.

As a consequence of Poisson summation,
\[
f(0) + E_f(\Lambda) = \frac{1}{\covol(\Lambda)}\big(\widehat{f}(0)+E_{\widehat{f}}(\Lambda^*)\big)
\]
for every lattice $\Lambda$.
Here $\widehat{f}$ is an abuse of notation, in
which we treat the potential function $f \colon \R_{>0} \to \R$
as a radial function on $\R^n$.

It follows that a lattice $\Lambda$ minimizes $E_f$ among lattices with a fixed
covolume if and only if $\Lambda^*$ minimizes $E_{\widehat{f}}$.  The most
important special case is the Gaussian potential function $G_c(r) = \exp(-\pi
cr^2)$, which has $n$-dimensional Fourier transform $\widehat{G_c}(r) = c^{-n/2} \exp(-\pi
r^2/c)$. In this case Poisson summation relates $E_{G_c}(\Lambda)$ to
$E_{G_{1/c}}(\Lambda^*)$.

One could ask if there is a reasonable analogue of the Poisson summation
formula for non-lattices. The obvious generalization would be to ask for
periodic configurations $\sP$ and $\sQ$ with
\[
\sum_{x \in \sP} f(x) = \delta(\sP) \sum_{y \in \sQ} \widehat{f}(y)
\]
for all well-behaved $f$.
Here $\delta(\sP)$ is the point density of $\sP$: if $\sP$ consists of $N$
translates of a lattice $\Lambda$, then $\delta(\sP) = N/\covol(\Lambda)$.
However, the requirement above is too stringent, for it forces $\sP$ and $\sQ$ to
be lattices, by Theorem~1 in \cite{C}. Instead, we are really interested in
the differences between points in $\sP$, at least for the purposes of
potential energy, so we modify the notion of duality as follows. For a
Schwartz function $f\colon \R^n \to \R$ and a periodic configuration $\sP = \bigcup_{j=1}^N
(\Lambda + v_j)$ (where $\Lambda$ is a lattice), we let
\[
\Sigma_f(\sP) = \frac{1}{N} \sum_{j,k=1}^N \sum_{x \in
  \Lambda} f(x + v_j - v_k)
\]
be the \emph{average pair sum} of $f$ over $\sP$. It is also the average over
all points $x \in \sP$ of $\Sigma_f(x, \sP) = \sum_{y \in \sP} f(y - x)$, and
this interpretation shows that it is independent of the decomposition of
$\sP$ as $ \bigcup_{j=1}^N (\Lambda + v_j)$.  Note that when $f$ is a radial
function, this sum is related to the potential energy by $\Sigma_f(\sP) =
E_f(\sP) + f(0)$, but we do not require $f$ to be radial.

\begin{definition} \label{definition:formalduality}
  We say two periodic configurations $\sP$ and $\sQ$ in $\R^n$ are
  \emph{formally dual} to each other if $\Sigma_f(\sP) = \delta(\sP)
  \Sigma_{\widehat{f}}(\sQ)$ for every Schwartz function $f \colon \R^n \to
  \R$.
\end{definition}

For a lattice, pair sums reduce to sums over the lattice itself.
Thus, two lattices are formally dual if and only if they are actually dual.

We define formal duality only for periodic configurations,
although there may be interesting extensions to the aperiodic
case.  Note also that the formal dual of a configuration
needn't be unique.  One form of non-uniqueness is obvious: if
$\sQ$ is a formal dual of $\sP$, then so are $\sQ+t$ and
$-\sQ+t$ for all vectors $t$.  However, formal duals are not
unique even modulo these transformations.  See
Remark~\ref{remark:uniqueness} for an example.

\begin{remark}
  If $\sP$ and $\sQ$ are formally dual as above, then we
  can prove $\delta(\sP) \delta(\sQ) = 1$ by considering a steep
  Gaussian $f(x) = \exp(-\pi c |x|^2)$ and letting $c \to
  \infty$. Therefore the relation of being formally dual is symmetric.
\end{remark}

Our notion of formal duality is stronger than another version in the
literature (see, for example, the question on p.~185 of \cite{CS2}).  The
other version asks for equality only for radial functions, which is
equivalent to a statement about the average theta series.  For clarity we
call that version radial formal duality:

\begin{definition}  \label{definition:radialformalduality}
  We say two periodic configurations $\sP$ and $\sQ$ in $\R^n$ are
  \emph{radially formally dual} to each other if $\Sigma_f(\sP) = \delta(\sP)
  \Sigma_{\widehat{f}}(\sQ)$ for every \emph{radial} Schwartz function $f \colon \R^n \to
  \R$.
\end{definition}

If $\Lambda_1$ and $\Lambda_2$ are distinct lattices in $\R^n$ with the same theta
series, then $\Lambda_1$ and $\Lambda_2^*$ are radially formally dual but not
dual and hence not formally dual.  The most interesting case is when $\Lambda_1$
and $\Lambda_2$ are not isometric (for example, $D_{16}^+$ and $E_8 \oplus E_8$),
but the simplest case is when $\Lambda_2$ is a rotation of $\Lambda_1$.

The discrete analogue of radial formal duality has been
investigated in the coding theory literature, with several
striking examples such as Kerdock and Preparata codes
\cite{HKCSS}.

Radial formal duality is all one needs for studying energy under radial
potential functions, but the stronger definition arose in the examples from
\cite{CKS} and possesses a richer structure theory.  For example,
Lemma~\ref{lemma:lineartransform} below fails for radial
formal duality (let $\sP$ be $\Z^2$, let $\sQ$ be $\Z^2$ rotated by an angle
of $\pi/4$, and let $\phi$ be the diagonal matrix with entries $2$ and $1$).

We will now transform the notion of formal duality into a more combinatorial
definition about subsets of abelian groups, rather than the continuous
setting of periodic configurations and potential functions. The first step is
the following easy result, which is Lemma~2 in~\cite{CKS}.

\begin{lemma} \label{lemma:lineartransform}
Let $\sP$ and $\sQ$ be periodic configurations of $\R^n$ which are formally
dual to each other, and let $\phi \in \GL_n(\R)$ be an invertible linear
transformation of the space. Then $\phi(\sP)$ and $(\phi^t)^{-1}(\sQ)$ are
formally dual to each other.
\end{lemma}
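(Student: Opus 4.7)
The plan is to verify the defining identity of Definition~\ref{definition:formalduality} for the pair $\phi(\sP)$, $(\phi^t)^{-1}(\sQ)$ by reducing it, via a linear change of variables, to the same identity for $\sP$ and $\sQ$ applied to the auxiliary test function $f \circ \phi$.

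First I would record the behavior of the average pair sum under a linear pushforward. Writing $\sP = \bigcup_{j=1}^N (\Lambda + v_j)$, the configuration $\phi(\sP)$ decomposes as $\bigcup_{j=1}^N (\phi(\Lambda) + \phi(v_j))$; substituting $x \mapsto \phi(x)$ in the definition of $\Sigma_f$ gives
\[
\Sigma_f(\phi(\sP)) = \Sigma_{f \circ \phi}(\sP),
\]
and analogously $\Sigma_{\widehat{f}}((\phi^t)^{-1}(\sQ)) = \Sigma_{\widehat{f} \circ (\phi^t)^{-1}}(\sQ)$. Next I would apply the formal duality hypothesis of $\sP$ and $\sQ$ to the Schwartz function $f \circ \phi$, obtaining
\[
\Sigma_{f\circ\phi}(\sP) = \delta(\sP)\, \Sigma_{\widehat{f \circ \phi}}(\sQ).
\]

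The key input is the standard Fourier-theoretic identity for linear changes of variable: substituting $u=\phi(x)$ in the definition of the Fourier transform yields
\[
\widehat{f \circ \phi}(y) = |\det\phi|^{-1}\, \widehat{f}\bigl((\phi^t)^{-1} y\bigr),
\]
so that $\widehat{f}\circ (\phi^t)^{-1} = |\det\phi|\,\widehat{f\circ\phi}$. Combined with the density scaling $\delta(\phi(\sP)) = \delta(\sP)/|\det\phi|$ (which follows immediately from $\covol(\phi(\Lambda)) = |\det\phi|\covol(\Lambda)$), one then computes
\[
\delta(\phi(\sP))\, \Sigma_{\widehat{f}}\bigl((\phi^t)^{-1}(\sQ)\bigr)
= \frac{\delta(\sP)}{|\det\phi|}\,\Sigma_{\widehat{f}\circ(\phi^t)^{-1}}(\sQ)
= \delta(\sP)\,\Sigma_{\widehat{f \circ \phi}}(\sQ),
\]
and the three displayed equations chain together to give $\Sigma_f(\phi(\sP)) = \delta(\phi(\sP))\,\Sigma_{\widehat{f}}((\phi^t)^{-1}(\sQ))$, as required.

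There is no real obstacle here: the proof is a routine matching of Jacobian factors between the pair-sum rescaling, the Fourier transform of $f\circ\phi$, and the density scaling of $\phi(\sP)$. The only point demanding care is to check that the two factors of $|\det\phi|$ appearing in the Fourier identity and in $\delta(\phi(\sP))$ cancel, which they do precisely because $\phi$ and $(\phi^t)^{-1}$ are placed on opposite sides of the duality.
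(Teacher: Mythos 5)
Your proposal is correct and follows essentially the same route as the paper: rewrite both pair sums via the substitution $\Sigma_f(\phi(\sP)) = \Sigma_{f\circ\phi}(\sP)$, apply formal duality to the test function $f\circ\phi$, and match the Jacobian factor from $\widehat{f\circ\phi}$ against the density scaling of $\phi(\sP)$. (Your use of $|\det\phi|$ is in fact slightly more careful than the paper's unsigned $\det(\phi)$.)
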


Here $\phi^t$ is the adjoint of $\phi$ with respect to the inner product
$\langle \cdot, \cdot \rangle$ (i.e., its matrix is the transpose of that of
$\phi$).

\begin{proof}
If $f$ is any Schwartz function, then so is $g = f \circ \phi$, and
\[
\widehat{g} = \frac{1}{\det(\phi)} \widehat{f} \circ (\phi^t)^{-1}.
\]
Therefore,
\begin{align*}
\Sigma_f\big(\phi(\sP)\big) = \Sigma_{f \circ \phi}(\sP) &= \Sigma_g(\sP) \\
&= \delta(\sP) \, \Sigma_{\widehat{g}} (\sQ) \\
&= \delta(\sP)\cdot \frac{1}{\det(\phi)} \cdot \Sigma_{\widehat{f} \circ (\phi^t)^{-1}} (\sQ) \\
&= \delta\big(\phi(\sP)\big) \, \Sigma_{\widehat{f}} \big( (\phi^t)^{-1}(\sQ) \big),
\end{align*}
which shows that $\phi(\sP)$ and $(\phi^t)^{-1}(\sQ)$ are formally dual.
\end{proof}

This lemma shows that, for a periodic configuration, the property of having a
formal dual depends only on the underlying abelian group and coset structure,
rather than how the configuration is embedded into $\R^n$.  In other words,
having a formal dual is not a metric property.

For further progress in making formal duality more combinatorial, we will
need to remove the Fourier transform from the definition.  We can do so using
Poisson summation, as follows.  The statement looks complicated, but it will
be an essential tool for simplifying the duality theory.

\begin{lemma} \label{lemma:fhatbothsides}
Let $\sP = \bigcup_{j=1}^N (\Lambda + v_j)$ and $\sQ = \bigcup_{j=1}^M
(\Gamma + w_j)$ be periodic configurations with underlying lattices $\Lambda$
and $\Gamma$, respectively.  Then $\sP$ and $\sQ$ are formally dual if and
only if for all Schwartz functions $f \colon \R^n \to \R$,
\[
\sum_{y \in \Lambda^*} \widehat{f}(y) \left| \frac{1}{N} \sum_{j=1}^N e^{2 \pi i \langle v_j, y \rangle} \right|^2
=
\frac{1}{M} \sum_{j,k=1}^M \sum_{z \in \Gamma} \widehat{f}(z + w_j - w_k).
\]
\end{lemma}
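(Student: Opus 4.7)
The plan is to apply the classical Poisson summation formula directly to one side of the definition of formal duality. Recall that formal duality means $\Sigma_f(\sP) = \delta(\sP)\,\Sigma_{\widehat{f}}(\sQ)$ for every Schwartz $f$. By the very definition of $\Sigma$, the right-hand side of this equation equals $\delta(\sP)$ times the right-hand side of the claimed identity, so everything reduces to rewriting $\Sigma_f(\sP)$ as $\delta(\sP)$ times the claimed left-hand side.

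To do this, I would first expand $\Sigma_f(\sP) = \frac{1}{N}\sum_{j,k=1}^N \sum_{x \in \Lambda} f(x + v_j - v_k)$ from its definition and apply Poisson summation to each inner sum. Since translating $f$ by $v_j - v_k$ multiplies its Fourier transform by $e^{2\pi i\langle v_j - v_k, y\rangle}$, each inner sum becomes $\covol(\Lambda)^{-1} \sum_{y \in \Lambda^*} e^{2\pi i\langle v_j - v_k, y\rangle}\widehat{f}(y)$. I would then interchange the finite $(j,k)$-sum with the sum over $y$ (legal because $\widehat{f}$ is Schwartz, so the $y$-sum converges absolutely) and observe that
\[
\sum_{j,k=1}^N e^{2\pi i\langle v_j - v_k, y\rangle} = \left|\sum_{j=1}^N e^{2\pi i\langle v_j, y\rangle}\right|^2.
\]
Collecting the constants and using $\delta(\sP) = N/\covol(\Lambda)$ produces
\[
\Sigma_f(\sP) = \delta(\sP) \sum_{y\in\Lambda^*}\widehat{f}(y)\left|\frac{1}{N}\sum_{j=1}^N e^{2\pi i\langle v_j, y\rangle}\right|^2,
\]
and dividing the formal-duality equation by $\delta(\sP) > 0$ yields the claimed identity. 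Every step is reversible, so the two conditions are equivalent for each Schwartz $f$.

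The only ``obstacle'', such as it is, is bookkeeping: one has to verify that the $1/N$ outside the double sum, the factor $N^2$ produced by the absolute-value squared, and the $1/\covol(\Lambda)$ from Poisson summation combine to exactly $\delta(\sP)$ applied to the normalized average $\frac{1}{N}\sum_j e^{2\pi i\langle v_j, y\rangle}$. No genuine analytic subtlety arises, because Poisson summation for Schwartz functions is standard and the only interchange of summations is with a finite sum.
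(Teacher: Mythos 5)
Your proposal is correct and follows essentially the same route as the paper's proof: apply Poisson summation to the translates $x \mapsto f(x + v_j - v_k)$, collapse the double sum of phase factors into $\left|\sum_j e^{2\pi i\langle v_j, y\rangle}\right|^2$, and match constants using $\delta(\sP) = N/\covol(\Lambda)$. The bookkeeping works out exactly as you describe, and every step is indeed reversible.
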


\begin{proof}
Let $v \in \R^n$. By Poisson summation for the function $x \mapsto f(x + v)$,
\[
\sum_{x \in \Lambda} f(x + v) = \frac{1}{\covol(\Lambda)} \sum_{y \in \Lambda^*} e^{2\pi i \langle v , y \rangle} \widehat{f}(y).
\]
Using this, if $\sP = \bigcup_{j=1}^N (\Lambda + v_j)$, then
\begin{align*}
\Sigma_f(\sP) &= \frac{1}{N} \sum_{j,k=1}^N \sum_{x \in \Lambda} f(x + v_j - v_k) \\
&= \frac{1}{N \covol(\Lambda)} \sum_{y \in \Lambda^*} \widehat{f}(y) \left| \sum_{j=1}^N e^{2 \pi i \langle v_j, y \rangle} \right|^2 \\
&= \delta(\sP) \sum_{y \in \Lambda^*} \widehat{f}(y) \left| \frac{1}{N} \sum_{j=1}^N e^{2 \pi i \langle v_j, y \rangle} \right|^2.
\end{align*}
Formal duality holds if and only if this quantity equals
\[
\delta(\sP) \Sigma_{\widehat{f}}(\sQ) = \frac{\delta(\sP)}{M} \sum_{j,k=1}^M \sum_{z \in \Gamma} \widehat{f}(z + w_j - w_k),
\]
as desired.
\end{proof}

This lemma has powerful consequences for the cosets of $\sP$ in $\Lambda$.
Recall that for a set $A$ in an abelian group $G$, we define $A - A = \{ x -
y \st x, y \in A \}$.

\begin{corollary}
Let $\Lambda$ and $\Gamma$ be underlying lattices of formally dual
configurations $\sP$ and $\sQ$, respectively. Then $\sP - \sP \subseteq
\Gamma^*$ and $\sQ - \sQ \subseteq \Lambda^*$.
\end{corollary}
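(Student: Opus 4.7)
The plan is to apply Lemma~\ref{lemma:fhatbothsides} directly and interpret the resulting identity as an equality of two discrete measures on $\R^n$ paired against Schwartz test functions. Since $f \mapsto \widehat{f}$ is a bijection on Schwartz space, the lemma's identity for every Schwartz $f$ is equivalent to the distributional equality
\[
\sum_{y \in \Lambda^*} \left|\frac{1}{N}\sum_{j=1}^N e^{2\pi i \langle v_j, y \rangle}\right|^2 \delta_y \;=\; \frac{1}{M}\sum_{j,k=1}^M \sum_{z \in \Gamma} \delta_{z + w_j - w_k}.
\]

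The next step is to read off supports. The left-hand measure is concentrated on $\Lambda^*$, while the right-hand measure is concentrated on $\sQ - \sQ = \bigcup_{j,k}(\Gamma + w_j - w_k)$, and its weight at any point $p \in \sQ - \sQ$ equals $\frac{1}{M}\,\#\{(z,j,k) : z + w_j - w_k = p\}$, which is strictly positive. Hence every point of $\sQ - \sQ$ must lie in the support of the left side, giving $\sQ - \sQ \subseteq \Lambda^*$. The inclusion $\sP - \sP \subseteq \Gamma^*$ then follows by symmetry: the Remark after Definition~\ref{definition:formalduality} tells us that formal duality is a symmetric relation, so applying the same argument to the pair $(\sQ, \sP)$ yields the conclusion.

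The only step requiring even a brief justification is the passage from ``the identity holds for every Schwartz $f$'' to ``the two atomic measures agree point by point.'' I do not expect this to be a real obstacle, because both $\Lambda^*$ and $\sQ - \sQ$ are locally finite, so one can isolate any individual atom $p$ by testing against a Gaussian centered at $p$ of sufficiently small width (which is itself Schwartz, hence a permissible choice for $\widehat{f}$), with contributions from other atoms made arbitrarily small. In short, the entire proof amounts to the observation that the right-hand side of Lemma~\ref{lemma:fhatbothsides} puts strictly positive mass on every point of $\sQ - \sQ$, while the left-hand side is supported inside $\Lambda^*$.
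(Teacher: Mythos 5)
Your proof is correct and follows essentially the same route as the paper: apply Lemma~\ref{lemma:fhatbothsides}, observe that as $f$ ranges over Schwartz functions so does $\widehat{f}$, and conclude that the right-hand side's support $\sQ-\sQ$ (where every atom has strictly positive weight) must lie inside $\Lambda^*$, with the other inclusion following from the symmetry of formal duality. Your explicit justification via narrow Gaussians isolating a single atom is exactly the detail the paper leaves implicit in the phrase ``since $\widehat{f}$ is an arbitrary Schwartz function, this forces\dots''.
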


\begin{proof}
It is enough to show the latter statement, since the former follows by
symmetry.  By Lemma~\ref{lemma:fhatbothsides},
\[
\sum_{y \in \Lambda^*} \widehat{f}(y) \left| \frac{1}{N} \sum_{j=1}^N e^{2 \pi i \langle v_j, y \rangle} \right|^2
=
\frac{1}{M} \sum_{j,k=1}^M \sum_{z \in \Gamma} \widehat{f}(z + w_j - w_k)
\]
for every Schwartz function $f$.  Since $\widehat{f}$ is an arbitrary
Schwartz function, this forces the set $\{z + w_j - w_k \st 1\le j,k \le M \textup{ and } z \in \Gamma \}$,
which is exactly $\sQ - \sQ$, to be contained in $\Lambda^*$.
\end{proof}

The following corollary holds for exactly the same reason.

\begin{corollary} \label{corollary:phases}
Let $\sP = \bigcup_{j=1}^N (\Lambda + v_j)$ and $\sQ = \bigcup_{j=1}^M
(\Gamma + w_j)$ be periodic configurations, such that $\sP - \sP \subseteq
\Gamma^*$ and $\sQ - \sQ \subseteq \Lambda^*$. Then $\sP$ is formally dual to
$\sQ$ if and only if for every $y \in \Lambda^*$,
\[
\left| \frac{1}{N} \sum_{j=1}^N e^{2 \pi i \langle v_j, y \rangle} \right|^2
=
\frac{1}{M}\cdot \# \{(z,j,k) \st \textrm{$1 \le j,k \le M$, $z \in \Gamma$, and $y = z+w_j-w_k$}\},
\]
i.e., $1/M$ times the number of ways the coset $y + \Gamma$ can be written as
a difference of two of the $M$ cosets of $\Gamma$ in $\sQ$.
\end{corollary}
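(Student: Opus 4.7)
The plan is to apply Lemma~\ref{lemma:fhatbothsides} and then match the coefficients of $\widehat{f}(y)$ term by term, exploiting the fact that $\widehat{f}$ can be chosen as an arbitrary Schwartz function. First, I would invoke that lemma to reduce formal duality of $\sP$ and $\sQ$ to the identity
\[
\sum_{y \in \Lambda^*} \widehat{f}(y) \left| \frac{1}{N} \sum_{j=1}^N e^{2 \pi i \langle v_j, y \rangle} \right|^2 = \frac{1}{M} \sum_{j,k=1}^M \sum_{z \in \Gamma} \widehat{f}(z + w_j - w_k)
\]
holding for every Schwartz $f$.

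Next, I would put both sides on the same index set. Since $\sQ - \sQ \subseteq \Lambda^*$ by hypothesis, every summand $z + w_j - w_k$ on the right-hand side lies in $\Lambda^*$, so I would regroup the triples $(z,j,k)$ by their common value $y = z + w_j - w_k$ to rewrite the right-hand side as
\[
\sum_{y \in \Lambda^*} \widehat{f}(y) \cdot \frac{1}{M} \cdot \# \{(z,j,k) \st 1 \le j,k \le M,\ z \in \Gamma,\ y = z + w_j - w_k\},
\]
noting that for each fixed $y$ the triple is determined by the pair $(j,k)$, so the coefficient is finite (and in fact bounded by $M^2$). The formal duality condition thereby becomes a single identity of the form $\sum_{y \in \Lambda^*} \widehat{f}(y)\, c(y) = 0$ for every Schwartz $f$, where $c(y)$ is the difference of the two coefficients appearing in the statement.

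Finally, I would extract pointwise vanishing of $c(y)$ by a standard test-function argument: for any $y_0 \in \Lambda^*$, take $\widehat{f}$ to be a smooth bump supported in a neighborhood of $y_0$ small enough to avoid every other lattice point of $\Lambda^*$ and with prescribed nonzero value at $y_0$. Its inverse Fourier transform is Schwartz, so this is a legitimate test function, and it isolates the single term at $y_0$ in the sum, forcing $c(y_0) = 0$. The main step of the argument is the regrouping by $y \in \Lambda^*$, which relies critically on the hypothesis $\sQ - \sQ \subseteq \Lambda^*$; beyond that I do not expect any serious obstacle, since the coefficient extraction by bump functions is exactly the same principle already used in the proof of the preceding corollary, now applied coefficient by coefficient rather than just to the supports of the two sides.
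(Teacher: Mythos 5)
Your proof is correct and follows essentially the same route as the paper, which simply notes that the corollary ``holds for exactly the same reason'' as the preceding one: apply Lemma~\ref{lemma:fhatbothsides}, regroup the right-hand side over $y\in\Lambda^*$ using $\sQ-\sQ\subseteq\Lambda^*$, and match coefficients of $\widehat{f}(y)$ since $\widehat{f}$ ranges over enough test functions. Your bump-function localization is just an explicit spelling-out of that coefficient comparison.
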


From now on, we will assume without loss of generality that $0 \in \sP$ (and
therefore $\Lambda \subseteq \sP$), and similarly $0 \in \sQ$.  We may do so
because formal duality is clearly translation-invariant.

Now $\sP = \sP - 0 \subseteq \sP - \sP \subseteq \Gamma^*$, so $\sP$
can be represented as a subset $S$ of size $N$ in the finite abelian group
$\Gamma^*/\Lambda$. Similarly, $\sQ$ corresponds to a subset $T$ of $M$
points in $\Lambda^*/\Gamma$. The natural pairing
\[
({\Gamma^*}/{\Lambda}) \times ({\Lambda^*}/{\Gamma}) \to S^1 \subset \C^*
\]
given by
\begin{equation} \label{eq:pairingdef}
\langle x + \Lambda, y + \Gamma \rangle  = e^{2 \pi i \langle x, y \rangle}
\end{equation}
identifies the two groups as duals.  In other words, we view
$\Lambda^*/\Gamma$ as the group $\Ghat$ of characters on $G :=
\Gamma^*/\Lambda$, with $\chi \in \Ghat$ acting on $g \in G$ via $\chi(g) =
\langle g,\chi \rangle$.  Note that in \eqref{eq:pairingdef}, $\langle \cdot,
\cdot \rangle$ denotes both the pairing between $G$ and $\Ghat$ and the
Euclidean inner product, but the type of the inputs makes the usage
unambiguous.  We will also canonically identify $G$ with the dual of $\Ghat$
and treat the pairing between them as symmetric.

Because $v_1, \dots, v_N \in \sP \subseteq \Gamma^*$, the quantity
\[
\left| \frac{1}{N} \sum_{j=1}^N e^{2 \pi i \langle v_j, y \rangle} \right|^2
\]
from Corollary~\ref{corollary:phases} only depends on $y$ modulo $\Gamma$.

We can now reformulate formal duality as follows. Let the Fourier transform
of a function $f \colon G \to \C$ be $\widehat{f} \colon \Ghat \to \C$,
defined by
\[
\widehat{f}(y) = \frac{1}{\sqrt{|G|}} \sum_{x \in G} f(x) \overline{\langle x,y \rangle}
 = \frac{1}{\sqrt{|G|}} \sum_{x \in G} f(x) y(-x).
\]

\begin{theorem} \label{theorem:main}
  With notation as above, let $\sP$ correspond to the translates of
  $\Lambda$ by elements of $S = \{v_1, \dots, v_N \} \subseteq G =
  \Gamma^*/\Lambda$, and $\sQ$ correspond to the translates of
  $\Gamma$ by $T = \{w_1, \dots, w_M \} \subseteq \Ghat =
  \Lambda^*/\Gamma$. Then $\sP$ and $\sQ$ are formally
  dual if and only if the following equivalent conditions hold.
\begin{enumerate}
\item \label{condition1} For every $y \in \Ghat$,
\[
\left|\frac{1}{N} \sum_{i=1}^N \langle v_i, y \rangle \right|^2
= \frac{1}{M}\cdot \# \{(j, k) \st 1 \leq j, k \leq M \textrm{ and } y=w_j-w_k\}.
\]
\item \label{condition2} For every function $f \colon G \to \C$,
\[
\frac{1}{N^{3/2}} \sum_{j,k = 1}^N f(v_j - v_k) =
\frac{1}{M^{3/2}} \sum_{j,k=1}^M \widehat{f}(w_j - w_k).
\]
\end{enumerate}
\end{theorem}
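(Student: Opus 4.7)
The plan is to derive each of the two conditions from what has already been established, and then to observe that Parseval-type identities exchange them.

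First, I would handle condition~\eqref{condition1} by simply rewriting Corollary~\ref{corollary:phases} in the new notation. Once we know that $\sP-\sP \subseteq \Gamma^*$ and $\sQ-\sQ\subseteq \Lambda^*$, the sets $S = \{v_j\}$ and $T=\{w_j\}$ descend to well-defined subsets of $G = \Gamma^*/\Lambda$ and $\Ghat = \Lambda^*/\Gamma$; this is what allows us to pass from a statement about $y\in\Lambda^*$ to one about $y\in\Ghat$. Both sides of the identity in Corollary~\ref{corollary:phases} depend only on the coset of $y$ modulo $\Gamma$: the left side because each $v_j\in\Gamma^*$, so $e^{2\pi i\langle v_j,y\rangle}$ only sees $y$ modulo $\Gamma$; the right side trivially, since shifting $y$ by an element of $\Gamma$ just reparametrizes the sum over $z\in\Gamma$. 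Combining this with the pairing \eqref{eq:pairingdef} yields condition~\eqref{condition1} verbatim.

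Next, I would show that conditions~\eqref{condition1} and~\eqref{condition2} are equivalent by finite Fourier analysis on $G$. The first key input is the volume/density identity $NM = |G|$: from the remark following Definition~\ref{definition:formalduality} we have $\delta(\sP)\delta(\sQ)=1$, and since $\Lambda\subseteq \Gamma^*$ we have $|G| = \covol(\Lambda)\covol(\Gamma) = NM$. The second key input is a direct computation of the two sums in condition~\eqref{condition2}. Writing $A(x)$ and $B(y)$ for the representation functions $\#\{(j,k): v_j-v_k=x\}$ and $\#\{(j,k): w_j-w_k=y\}$, the left side of \eqref{condition2} equals $N^{-3/2}\sum_{x\in G}A(x)f(x)$, and applying $\widehat{f}(y)=|G|^{-1/2}\sum f(x)y(-x)$ to the right side turns it into $M^{-3/2}|G|^{-1/2}\sum_{x}f(x)\sum_{y} B(y)y(-x)$. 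Since this must hold for all $f$, it is equivalent to the pointwise identity
\[
\frac{1}{N^{3/2}} A(x) = \frac{1}{M^{3/2}\sqrt{|G|}}\sum_{y\in\Ghat} B(y) y(-x)\qquad(x\in G).
\]
Taking the finite Fourier transform of both sides and using $\widehat{A}(y)=|G|^{-1/2}|\sum_j \langle v_j,y\rangle|^2$ together with $B(-y)=B(y)$, this identity collapses, after clearing the factor $NM = |G|$, to condition~\eqref{condition1}. Fourier inversion makes the argument reversible.

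The steps are all routine finite Fourier analysis; the only real bookkeeping obstacle is keeping the normalization factors straight — the $N^{-3/2}$ and $M^{-3/2}$ in condition~\eqref{condition2} only balance out because of the relation $NM = |G|$, so it is important to establish that density identity before making the Fourier comparison. Once that is in place, condition~\eqref{condition1} transfers the content of Corollary~\ref{corollary:phases} into the finite group, and a single application of Parseval delivers condition~\eqref{condition2}.
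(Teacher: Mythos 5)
Your overall route is the same as the paper's: condition~\eqref{condition1} is read off from Corollary~\ref{corollary:phases} via the pairing \eqref{eq:pairingdef}, and the equivalence of \eqref{condition1} and \eqref{condition2} is a finite Fourier computation (your version, phrased with the representation functions $A$ and $B$, is the same calculation the paper does by expanding $f$ in terms of $\widehat f$ and comparing coefficients of $\widehat f(y)$). The Fourier bookkeeping itself is correct: the pointwise identity you extract does collapse to condition~\eqref{condition1} exactly when $NM=|G|$.

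There is, however, one genuine logical gap: your only source for the normalization identity $NM=|G|$ is $\delta(\sP)\delta(\sQ)=1$, which the remark after Definition~\ref{definition:formalduality} derives \emph{from formal duality}. That is fine for the implication ``formal duality (equivalently \eqref{condition1}) $\Rightarrow$ \eqref{condition2}'', but it is circular in the other direction: to prove \eqref{condition2} $\Rightarrow$ \eqref{condition1} you must know $NM=|G|$ while assuming only \eqref{condition2}, and at that point formal duality is not yet available. The repair is short and is exactly what the paper does: derive $|G|=MN$ from each condition separately. From \eqref{condition2}, take $f$ to be the characteristic function of $0\in G$, which gives $N^{-3/2}\cdot N = M^{-3/2}|G|^{-1/2}\cdot M^2$ and hence $|G|=MN$; from \eqref{condition1}, sum over all $y\in\Ghat$ and use orthogonality of characters to get $|\Ghat|/N = M$. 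With those two lines added, your argument is complete.
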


\begin{proof}
  The equivalence of statement~\eqref{condition1} and formal duality is a mild
  rephrasing of Corollary~\ref{corollary:phases}.  To see why \eqref{condition1} is equivalent
  to \eqref{condition2}, we first note that
\[
  f(x) = \frac{1}{\sqrt{|G|}}
  \sum_{y \in \Ghat} \widehat{f}(y) \langle x,y \rangle.
\]
We now have
\begin{align*}
\frac{1}{N^{3/2}} \sum_{j,k = 1}^N f(v_j - v_k) &= \frac{1}{N^{3/2}} \cdot \frac{1}{\sqrt{|G|}} \sum_{j,k=1}^N \sum_{y \in \Ghat}  \widehat{f}(y) \langle v_j - v_k,y \rangle \\
&= \frac{1}{N^{3/2}} \cdot \frac{1}{\sqrt{|G|}} \sum_{j,k} \sum_y  \widehat{f}(y) \langle v_j,y \rangle\overline{\langle v_k,y \rangle} \\
&= \frac{1}{N^{3/2}} \cdot \frac{1}{\sqrt{|G|}} \sum_y  \widehat{f}(y) \sum_{j,k} \langle v_j,y \rangle\overline{\langle v_k,y \rangle} \\
&= \frac{1}{N^{3/2}} \cdot \frac{1}{\sqrt{|G|}} \sum_y  \widehat{f}(y) \left| \sum_j \langle v_j,y \rangle \right|^2 \\
&=  \sqrt{\frac{N}{|G|}} \cdot \sum_y  \widehat{f}(y) \left| \frac{1}{N} \sum_j \langle v_j,y \rangle \right|^2.
\end{align*}
The last expression equals
\[
\frac{1}{M} \sqrt{\frac{N}{|G|}} \sum_{j,k=1}^M \widehat{f}(w_j - w_k)
\]
for every $f$ if and only if \eqref{condition1} holds.  Thus, we have shown
that \eqref{condition1} is equivalent to
\begin{equation} \label{eq:mutant}
\frac{1}{N^{3/2}} \sum_{j,k = 1}^N f(v_j - v_k) =
\frac{1}{M} \sqrt{\frac{N}{|G|}} \sum_{j,k=1}^M \widehat{f}(w_j - w_k).
\end{equation}
To complete the proof of equivalence, we will show that \eqref{condition1}
and \eqref{condition2} each imply $|G|=MN$ (in which case \eqref{eq:mutant}
is equivalent to \eqref{condition2}).

First, assume \eqref{condition1}.  If we sum over all $y \in \Ghat$ and apply
orthogonality of distinct characters on $\Ghat$, we find that
\[
\sum_{y \in \Ghat} \left|\frac{1}{N} \sum_{i=1}^N \langle v_i, y \rangle \right|^2
= \sum_{y \in \Ghat} \frac{1}{N^2} \sum_{i=1}^N |\langle v_i, y \rangle|^2 =
\frac{1}{N^2} N | \Ghat|.
\]
Thus, \eqref{condition1} yields
\[
\frac{1}{N^2} N | \Ghat| = \frac{1}{M} \cdot M^2,
\]
implying $|G| = |\Ghat| = MN$. Assuming \eqref{condition2}, we can apply it
with $f$ being the characteristic function of the identity in $G$ to obtain
\[
\frac{1}{N^{3/2}} \cdot N = \frac{1}{M^{3/2}} \cdot \frac{1}{\sqrt{|G|}} \cdot M^2,
\]
which again implies $|G| = MN$.
\end{proof}

\begin{definition} \label{def:main}
  We say that subsets $S$ of a finite abelian group $G$ and $T$ of
  $\Ghat$ are \emph{formally dual} if the following equivalent
  conditions hold.
\begin{enumerate}
\item \label{condition1d} For every $y \in \Ghat$,
\[
\left|\frac{1}{|S|} \sum_{v \in S} \langle v, y \rangle \right|^2
= \frac{1}{|T|}\cdot \# \{(w,w') \in T \times T \st y=w-w'\}.
\]
\item \label{condition2d} For every function $f \colon G \to \C$,
\[
\frac{1}{|S|^{3/2}} \sum_{v,v' \in S} f(v-v') =
\frac{1}{|T|^{3/2}} \sum_{w,w' \in T} \widehat{f}(w-w').
\]
\end{enumerate}
\end{definition}

Thus, Theorem~\ref{theorem:main} reduces formal duality in Euclidean space to
the setting of finite abelian groups.

\begin{remark}
  The second criterion in the definition immediately implies that the relation of formal
  duality is symmetric. However, the first criterion seems to be more
  useful for concrete calculations, and it is the one we will use in
  our examples.
\end{remark}

\section{Examples}

The simplest examples of formally dual configurations in $\R^n$ are of course
lattices and their duals. These correspond to taking the trivial abelian
group $G = \{0\}$, with $S = G$ and $T = \Ghat = \{0\}$.

\subsection{The TITO configuration}

The simplest non-trivial example of a pair of formally dual configurations is the
following. Consider the abelian group $G = \Z/4\Z$, and identify $\Ghat =
\Z/4\Z$ via the pairing $\langle x, y \rangle = e^{2 \pi i xy/4 }$. Let $S =
T = \{0,1\}$.  We check condition~\eqref{condition1d} of
Definition~\ref{def:main} as follows for each value of $y$:
\begin{align*}
y = 0\colon& \quad \left|\frac{1}{2}(1 + 1) \right|^2 = 1 = \frac{1}{2} \# \{ (0,0), (1,1) \}, \\
y = 1\colon& \quad \left|\frac{1}{2}(1 + i) \right|^2 = \frac{1}{2} = \frac{1}{2} \# \{ (1,0) \}, \\
y = 2\colon& \quad \left|\frac{1}{2}(1 -1) \right|^2 = 0 = \frac{1}{2} \# \{ \}, \\
y = 3\colon& \quad \left|\frac{1}{2}(1 - i) \right|^2 = \frac{1}{2} = \frac{1}{2} \# \{ (0,1) \}.
\end{align*}
Thus, $S$ and $T$ are formally dual to each other. We call this configuration
TITO, which stands for ``two-in two-out'':

\begin{center}
\setlength{\unitlength}{0.1cm}
\begin{picture}(95,6)(-15,-3)
\put(-11.59,0){\dots} \put(0,0){\circle*{1.5}} \put(10,0){\circle*{1.5}}
\put(20,0){\circle{1.5}} \put(30,0){\circle{1.5}} \put(40,0){\circle*{1.5}}
\put(50,0){\circle*{1.5}} \put(60,0){\circle{1.5}} \put(70,0){\circle{1.5}}
\put(78.41,0){\dots} % Note that -11.59 and 78.41 are for centering.
\end{picture}
\end{center}

TITO yields the following formally self-dual configuration in one-dimensional
Euclidean space $\R$:
\[
\sP = \sQ = 2\Z \cup (2\Z + 1/2).
\]
All of the examples from \cite{CKS} described in the introduction are
products of copies of $\Z$ and the TITO configuration $\sP$, up to linear
transformations.  For example, it is not hard to check that for odd $n$ we
can obtain $D_n^+$ from the product $\sP \times \Z^{n-1}$.  (Recall that for
even $n$, $D_n^+$ is a lattice.)  Similarly, the putative optimum $\sP_6$ in
six dimensions can be obtained from $\sP^2 \times \Z^4$.  These product
decompositions imply formal duality, by the following lemma.

\begin{lemma}
Let $S_1 \subseteq G_1$ and $T_1 \subseteq \widehat{G_1}$ be formal duals,
and let $S_2 \subseteq G_2$ and $T_2 \subseteq \widehat{G_2}$ be formal
duals. Then $S_1 \times S_2 \subseteq G_1 \times G_2$ is formally dual to
$T_1 \times T_2 \subseteq \widehat{G_1} \times \widehat{G_2}$.
\end{lemma}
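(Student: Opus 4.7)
The plan is to verify condition~\eqref{condition1d} of Definition~\ref{def:main} directly, exploiting the fact that every quantity involved factors as a product over the two coordinates. Both sides of the required identity will split naturally, and the result will follow by multiplying the hypotheses for $(S_1,T_1)$ and $(S_2,T_2)$.

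First, I would set up the identification of dual groups: $\widehat{G_1 \times G_2}$ is canonically isomorphic to $\widehat{G_1} \times \widehat{G_2}$ via the product pairing
\[
\bigl\langle (v_1,v_2),\, (y_1,y_2) \bigr\rangle = \langle v_1, y_1 \rangle \cdot \langle v_2, y_2 \rangle.
\]
Fix an arbitrary character $y = (y_1,y_2) \in \widehat{G_1} \times \widehat{G_2}$. Then the left-hand side of \eqref{condition1d} for $S_1 \times S_2$ at $y$ is
\[
\left| \frac{1}{|S_1||S_2|} \sum_{v_1 \in S_1}\sum_{v_2 \in S_2} \langle v_1,y_1\rangle \langle v_2,y_2\rangle \right|^2
= \left| \frac{1}{|S_1|}\sum_{v_1 \in S_1} \langle v_1,y_1\rangle \right|^2 \left| \frac{1}{|S_2|}\sum_{v_2 \in S_2} \langle v_2,y_2\rangle \right|^2,
\]
i.e., the product of the two left-hand sides for the constituent pairs.

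Next I would do the analogous factorization on the right-hand side: a pair $\bigl((w_1,w_2),(w_1',w_2')\bigr) \in (T_1\times T_2)^2$ satisfies $(y_1,y_2) = (w_1,w_2)-(w_1',w_2')$ if and only if $y_1 = w_1-w_1'$ in $\widehat{G_1}$ and $y_2 = w_2-w_2'$ in $\widehat{G_2}$ separately, so the count of such pairs is the product of the counts for the two coordinates, and the normalizing factor $1/(|T_1||T_2|)$ splits correspondingly. Applying the formal duality of $(S_1,T_1)$ at $y_1$ and of $(S_2,T_2)$ at $y_2$ to each factor, the two sides agree, which is what was required.

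There is no substantive obstacle here; the only thing worth being careful about is the identification of $\widehat{G_1 \times G_2}$ with $\widehat{G_1} \times \widehat{G_2}$ so that the character sum and the difference count genuinely factor. Once that identification is in place, the argument is a one-line product of two instances of condition~\eqref{condition1d}.
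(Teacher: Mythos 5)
Your argument is correct, and it is the alternative route that the paper itself alludes to but does not carry out: the paper proves the lemma via condition~\eqref{condition2d} of Definition~\ref{def:main}, checking the functional identity for product functions $f(x_1,x_2)=f_1(x_1)f_2(x_2)$ (where both the difference sums and the Fourier transforms factor) and then invoking the fact that such functions span all functions on $G_1\times G_2$; it remarks parenthetically that the first criterion also gives a simple proof, which is exactly what you have supplied. Your version via condition~\eqref{condition1d} is entirely pointwise in the character $y=(y_1,y_2)$ and needs no spanning argument, so it is arguably the more elementary and self-contained of the two; the paper's version via~\eqref{condition2d} has the advantage of working directly with the functional formulation (the one that makes symmetry of formal duality manifest) and of isolating the single multiplicative input, namely that the Fourier transform on $G_1\times G_2$ of a product function is the product of the Fourier transforms, with the normalizations matching because $\sqrt{|G_1\times G_2|}=\sqrt{|G_1|}\sqrt{|G_2|}$. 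The one point you rightly flag --- the identification of $\widehat{G_1\times G_2}$ with $\widehat{G_1}\times\widehat{G_2}$ under the product pairing, so that both the character sum and the difference count genuinely split coordinatewise --- is needed in either approach, and your handling of it is fine.
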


\begin{proof}
This follows directly from the second criterion in Definition~\ref{def:main}.
(Of course the first criterion also leads to a simple proof.)  Setting $G =
G_1 \times G_2$, $S = S_1 \times S_2$, and $T = T_1 \times T_2$ and
identifying $\Ghat$ with $\widehat{G_1} \times \widehat{G_2}$, we must show
that every function $f \colon G \to \C$ satisfies
\[
\frac{1}{|S|^{3/2}} \sum_{v,v' \in S} f(v-v') =
\frac{1}{|T|^{3/2}} \sum_{w,w' \in T} \widehat{f}(w-w').
\]
This identity follows immediately from taking the product of the
corresponding identities for $G_1$ and $G_2$ if there are functions $f_i
\colon G_i \to \C$ such that $f(x_1,x_2) = f_1(x_1) f_2(x_2)$ for all
$(x_1,x_2) \in G_1 \times G_2$. Such functions span all the functions on $G_1
\times G_2$, which completes the proof.
\end{proof}

\subsection{The Gauss sum configurations}

We now consider the case $G = (\Z/p\Z)^2$ and $\Ghat =
(\Z/p\Z)^2$, with $p$ an odd prime. The pairing is given by
\[
\langle (a,b), (c,d) \rangle = \zeta_p^{ac + bd},
\]
where $\zeta_p = e^{2\pi i/p}$.

\begin{theorem} \label{theorem:gauss}
For all nonzero elements $\alpha$ and $\beta$ of $\Z/p\Z$, the subsets $S =
\{(\alpha n^2, \beta n) \st n \in \Z/p\Z \}$ and $T = \{ (n,n^2) \st n \in
\Z/p\Z \}$ are formally dual to each other.
\end{theorem}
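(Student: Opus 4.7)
The plan is to verify condition~\eqref{condition1d} of Definition~\ref{def:main} directly, evaluating both sides in closed form. Fix $y = (c,d) \in \Ghat$. The left-hand side becomes
\[
\left|\frac{1}{p}\sum_{n\in\Z/p\Z}\zeta_p^{(\alpha c)n^2 + (\beta d)n}\right|^2,
\]
a quadratic exponential sum in $n$, while the right-hand side counts (divided by $p$) the pairs $(m,m')\in(\Z/p\Z)^2$ with
\[
m - m' = c\qquad\text{and}\qquad m^2 - (m')^2 = d.
\]

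I would split into three cases according to the vanishing of $c$ and $d$. When $c=d=0$, both sides equal $1$ trivially (the LHS sum is $p$, and on the RHS every pair $m=m'$ works). When $c = 0$ and $d\neq 0$, the LHS collapses to $|\frac{1}{p}\sum_n \zeta_p^{\beta d n}|^2 = 0$ because $\beta d \not\equiv 0\pmod p$, and the RHS is zero because $m=m'$ forces $m^2-(m')^2=0\neq d$. The substantive case is $c\neq 0$: on the RHS the system factors as $(m-m')(m+m') = d$ with $m-m' = c$, so $m+m' = d/c$ is forced and the pair $(m,m')$ is uniquely determined, giving count $1$ and hence RHS $= 1/p$. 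On the LHS, since $\alpha c\neq 0$ in $\Z/p\Z$, I complete the square: writing $a = \alpha c$ and $b = \beta d$, the sum equals $\zeta_p^{-b^2/(4a)}\sum_{m}\zeta_p^{am^2}$, whose modulus is $|\sum_m \zeta_p^{am^2}|$. The classical evaluation of the quadratic Gauss sum gives $|\sum_m \zeta_p^{am^2}| = \sqrt{p}$ for every $a\not\equiv 0\pmod p$, so the LHS equals $(\sqrt p/p)^2 = 1/p$, matching the RHS.

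The heart of the argument is thus the quadratic Gauss sum bound $|\sum_m \zeta_p^{am^2}|=\sqrt p$, which is the reason the parabola $\{(n,n^2)\}$ pairs so cleanly with a parabola of the form $\{(\alpha n^2,\beta n)\}$ under the duality pairing: the inner product $\langle(\alpha n^2,\beta n),(c,d)\rangle$ is a quadratic polynomial in $n$, and its squared-modulus average over $n$ is exactly the reciprocal of $p$ precisely when the leading coefficient is nonzero (i.e., $c\neq 0$), mirroring the count of solutions to the parabola difference equation. I do not anticipate a genuine obstacle: completing the square requires inverting $2$ and $4a$ modulo $p$, which is fine since $p$ is odd and $\alpha c\neq 0$, and the role of $\alpha,\beta$ being nonzero is only to ensure that the linear-in-$n$ contribution in the $c=0$ case vanishes. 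The only care needed is in the case splitting and in citing (or briefly recalling) the Gauss sum modulus; once these are in hand, the identity in condition~\eqref{condition1d} holds for every $y\in\Ghat$, establishing formal duality.
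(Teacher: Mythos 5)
Your proof is correct and takes essentially the same approach as the paper: completing the square to reduce the exponential sum to the classical quadratic Gauss sum of modulus $\sqrt{p}$, and matching this against the count of solutions to $m-m'=c$, $m^2-(m')^2=d$. Your case split on $c\neq 0$ (with $d$ arbitrary) is in fact stated slightly more cleanly than the paper's, but the substance is identical.
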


\begin{proof}
Recall that the absolute value squared of the classical Gauss sum
$\sum_{n=1}^p \zeta_p^{n^2}$ is $p$. It follows by completing the square that
\[
\left|\sum_{n=1}^p\zeta_p^{c\alpha n^2+d\beta n}\right|^2=
\begin{cases}
p^2 & \textrm{ if $p$ divides $c$ and $d$,}  \\
0 & \textrm{ if $p$ divides $c$ but not $d$, and }  \\
p & \textrm{ if $p$ does not divide $c$. }
\end{cases}
\]

Thus, to check formal duality using criterion~\eqref{condition1d} from
Definition~\ref{def:main}, we just need to verify that the system of
equations
\[
(c,d)=(j-k, j^2-k^2)
\]
has $p$ solutions if $c=d=0$, no solution if $c=0, d\ne 0$ and exactly
one solution if $c, d\ne 0$. The first two of these statements are
obvious. For the last one, note that we may solve $j+k = d/c$, which
leads to a unique solution $(j,k) = \left(\tfrac 12(\frac{d}{c}+c),
\tfrac 12(\frac{d}{c}-c)\right)$, since $2$ is invertible modulo $p$.
\end{proof}

\begin{remark} \label{remark:uniqueness}
Because $\alpha$ and $\beta$ can vary, the formal dual of a subset is not
unique, even modulo translation and automorphisms.
\end{remark}

\section{Structure theory in the cyclic case}

\subsection{Basic structure theory}

We begin with a few observations on the structure of formally dual sets.

The first basic observation is that if $S \subseteq G$ and $T \subseteq
\Ghat$ are formally dual, and $x \in G$, $y \in \Ghat$, then $S + x$ and $T +
y$ are also formally dual (since formal duality only cares about differences
of elements).

Let $G$ be a finite abelian group, and $H$ a subgroup of $G$.
Viewing $\Ghat = \Hom(G, S^1)$ and $\Hhat = \Hom(H,S^1)$, we
have a natural restriction map $\phi \colon \Ghat \to \Hhat$,
with kernel the annihilator of $H$, i.e.,
\[
H^\perp := \{y \in \Ghat \st \langle x, y \rangle = 1 \textrm{ for all } x \in H \}.
\]
Now, if $S \subseteq H$ and $T \subseteq \Hhat$ are formally dual subsets, we
may regard $S$ as a subset of $G$ and lift $T$ to $\Ghat$ using $\phi^{-1}$.

\begin{lemma} \label{lemma:liftingequiv}
The subsets $S \subseteq H$ and $T \subseteq \Hhat$ are formally dual if and
only if $S \subseteq G$ and $\phi^{-1}(T) \subseteq \Ghat$ are formally dual.
\end{lemma}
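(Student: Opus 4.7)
The plan is to verify criterion~\eqref{condition1d} from Definition~\ref{def:main} for $(S, \phi^{-1}(T))$ in $G$ by translating both sides back to $\Hhat$ via the restriction map $\phi \colon \Ghat \to \Hhat$. The key structural facts are that $\phi$ is surjective, that $\ker \phi = H^\perp$ has order $|G|/|H|$, and consequently that $\phi^{-1}(T)$ is a disjoint union of $|T|$ cosets of $H^\perp$, so $|\phi^{-1}(T)| = |T| \cdot |G|/|H|$. As a sanity check, formal duality in $H$ gives $|S| \cdot |T| = |H|$, and therefore $|S| \cdot |\phi^{-1}(T)| = |G|$, as needed for formal duality in $G$ (and conversely).

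For the left-hand side of~\eqref{condition1d}, I would exploit the fact that every $v \in S$ lies in $H$, so the pairing $\langle v, y \rangle$ depends only on the restriction $\phi(y) \in \Hhat$ and in fact equals the $H$-pairing $\langle v, \phi(y) \rangle$. Hence the quantity
\[
\left|\frac{1}{|S|} \sum_{v \in S} \langle v, y \rangle \right|^2
\]
is exactly the left-hand side of~\eqref{condition1d} for $(S, T)$ in $H$ evaluated at $\phi(y)$.

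For the right-hand side, I would count pairs $(w, w') \in \phi^{-1}(T) \times \phi^{-1}(T)$ with $w - w' = y$ by first fixing their projections $(t, t') = (\phi(w), \phi(w')) \in T \times T$; necessarily $t - t' = \phi(y)$. Given such a pair $(t, t')$, once $w$ is chosen freely in the coset $\phi^{-1}(t)$ (which has $|H^\perp|$ elements), the element $w' = w - y$ automatically lies in $\phi^{-1}(t')$, since its image under $\phi$ is $t - \phi(y) = t'$. Hence the count is $|H^\perp|$ times the number of pairs $(t, t') \in T \times T$ with $t - t' = \phi(y)$. Dividing by $|\phi^{-1}(T)| = |T| \cdot |H^\perp|$ makes the factor of $|H^\perp|$ cancel, leaving the right-hand side of~\eqref{condition1d} for $(S, T)$ at $\phi(y)$. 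Since $\phi$ is surjective, $y$ running over $\Ghat$ produces exactly the same family of conditions as $z = \phi(y)$ running over $\Hhat$, which gives the equivalence. I do not anticipate a genuine obstacle; the only point that requires care is the bookkeeping that makes the two factors of $|H^\perp|$ on the right-hand side cancel precisely.
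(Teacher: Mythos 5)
Your argument is correct, but it runs in the opposite direction from the paper's. The paper applies criterion~\eqref{condition1d} of Definition~\ref{def:main} ``with the roles of $G$ and $\Ghat$ reversed'': it tests the conditions indexed by $x \in G$, where the character sum runs over $T$ (respectively $\phi^{-1}(T)$) and the difference count is taken in $S$. That route requires two separate computations: for $x \in H$ the sum over $\phi^{-1}(T)$ equals $(G:H)$ times the sum over $T$, since each fibre contributes $(G:H)$ identical terms; and for $x \notin H$ both sides vanish, the character sum because $y \mapsto \langle y, x\rangle$ is a nontrivial character of $H^\perp$ summed over $H^\perp$. You instead test the conditions indexed by $y \in \Ghat$, with the character sum over $S$ and the difference count in $\phi^{-1}(T)$; your key step is the fibre count showing that pairs in $\phi^{-1}(T)\times\phi^{-1}(T)$ with difference $y$ occur in groups of $|H^\perp|$ lying over pairs in $T \times T$ with difference $\phi(y)$, and surjectivity of $\phi$ then matches the two families of conditions exactly, with no separate degenerate case to handle. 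Both proofs are elementary and of comparable length; yours trades the character-orthogonality computation for coset bookkeeping, and it implicitly relies on the symmetry of the formal-duality criterion, which the paper makes explicit by invoking the reversed form. All the facts you use --- surjectivity of the restriction map, $|H^\perp| = (G:H)$, and $|S|\,|T| = |H|$ --- are standard or already established in the paper, so there is no gap.
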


\begin{proof}
The easiest way to see this is to use condition \eqref{condition1d} of
Definition~\ref{def:main}, with the roles of $G$ and $\Ghat$ reversed.  It
says that $S \subseteq H$ and $T \subseteq \Hhat$ are formally dual iff for
all $x \in H$,
\[
\left| \frac{1}{|T|} \sum_{w \in T} \langle w, x \rangle \right|^2 =
\frac{1}{|S|} \cdot \# \{(v,v') \in S \times S \st x = v-v' \}.
\]

Under the above transformation from $(H, \Hhat, S, T)$ to $(G, \Ghat, S,
\phi^{-1}(T))$, the right side remains unchanged if $x \in H$, while the left
side becomes
\[
\left| \frac{1}{(G:H) |T|} \sum_{z \in \phi^{-1}(T)} \langle z, x \rangle \right|^2
= \left| \frac{1}{|T|} \sum_{w \in T} \langle w, x \rangle \right|^2
\]
since for every $z \in \Ghat$ mapping to $w \in \Hhat$ under $\phi$, we have
$\langle z, x \rangle = \langle w, x \rangle$, and there are exactly $(G:H)$
such $z$ for any $w$.  Thus, for $x \in H$ condition \eqref{condition1d}
holds for $(G, \Ghat, S, \phi^{-1}(T))$ iff it holds for $(H, \Hhat, S, T)$.

On the other hand, if $x \notin H$, then
\[
\# \{(v,v') \in S \times S \st x = v-v' \} = 0,
\]
since $S - S \subseteq H - H = H$.  The sum
\[
\sum_{z \in \phi^{-1}(T)} \langle z, x \rangle
\]
also vanishes: for each $t \in T$, let $t_0$ be any element of
$\phi^{-1}(t)$, and then
\[
\sum_{z \in \phi^{-1}(\{t\})} \langle z, x \rangle
= \sum_{y \in H^\perp} \langle y+t_0, x\rangle
= \langle t_0, x\rangle \sum_{y \in H^\perp} \langle y, x\rangle = 0,
\]
because $y \mapsto \langle y,x \rangle$ is a non-trivial character of
$H^\perp$, which sums to zero over $H^\perp$.  This completes the proof of
equivalence.
\end{proof}

In fact, this construction is reversible.

\begin{lemma} \label{prim}
Let $S \subseteq H \le G$ be formally dual to $T \subseteq \Ghat$. Then $T$
is invariant under addition by any element of $H^\perp$, and the image of $T$
under the restriction map $\phi$ is a formal dual to $S \subseteq H$ in $\Hhat$.
\end{lemma}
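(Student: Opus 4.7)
The plan is to extract both conclusions directly from criterion~\eqref{condition1d} of Definition~\ref{def:main}, applied to the formally dual pair $S \subseteq G$ and $T \subseteq \Ghat$, exploiting the hypothesis $S \subseteq H$.

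First I would observe that for every $v \in S$ and every $h \in H^\perp$, the pairing $\langle v, h \rangle = 1$, so the left-hand side of \eqref{condition1d},
\[
\left|\frac{1}{|S|}\sum_{v \in S} \langle v, y \rangle\right|^2,
\]
is invariant under replacing $y$ by $y + h$ with $h \in H^\perp$. Consequently, the right-hand side, the normalized difference-count for $T$, must also be constant on cosets of $H^\perp$ in $\Ghat$. Specializing to $y = 0$ gives LHS $= 1$, so for every $y \in H^\perp$ we have $\#\{(w,w')\in T\times T : w-w' = y\} = |T|$; this forces $w - y \in T$ for each $w \in T$, i.e., $T$ is stable under translation by $H^\perp$. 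This settles the first claim.

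For the second claim I would pass to the restriction map $\phi \colon \Ghat \to \Hhat$ and verify the symmetric form of \eqref{condition1d} for the quadruple $(H, \Hhat, S, \phi(T))$, namely that for every $x \in H$,
\[
\left|\frac{1}{|\phi(T)|}\sum_{\bar w \in \phi(T)} \langle \bar w, x \rangle\right|^2
= \frac{1}{|S|}\cdot\#\{(v,v') \in S \times S \st x = v - v'\}.
\]
By the $H^\perp$-invariance of $T$ just proved, $|T| = |H^\perp|\cdot|\phi(T)|$, and breaking $T$ into fibers of $\phi$ gives
\[
\sum_{z \in T} \langle z, x \rangle = |H^\perp|\sum_{\bar w \in \phi(T)} \langle \bar w, x \rangle
\]
for $x \in H$, since $\sum_{y \in H^\perp}\langle y, x\rangle = |H^\perp|$. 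Dividing by $|T|$ identifies the prospective left-hand side for $(H, \Hhat, S, \phi(T))$ at $x$ with the left-hand side of \eqref{condition1d} for $(G, \Ghat, S, T)$ at the same $x$, and by the hypothesized formal duality of $S$ and $T$ this equals the common right-hand side (which is unchanged because $S \subseteq H$). Hence $\phi(T)$ is formally dual to $S$ in $\Hhat$.

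The main conceptual step is the first one: seeing that $H^\perp$-invariance of $T$ is forced by the equality case of the character-sum identity at elements of $H^\perp$. Everything afterwards is a bookkeeping calculation that reuses the identity already verified in the proof of Lemma~\ref{lemma:liftingequiv}, so I expect no real obstacle beyond keeping the normalization factor $[G:H] = |H^\perp|$ straight.
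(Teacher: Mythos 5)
Your proof is correct, and its first (and crucial) half takes a genuinely different route from the paper's. To show that $T$ is $H^\perp$-invariant, you evaluate condition~\eqref{condition1d} of Definition~\ref{def:main} at each individual $y \in H^\perp$: since $S \subseteq H$, the character sum over $S$ is trivially $|S|$ there, so the count $\#\{(w,w') \in T \times T \st w - w' = y\}$ must attain its maximum possible value $|T|$, which immediately forces $T - y = T$. The paper instead works with the role-reversed identity, sums it over all $x \in H$, introduces the fiber multiplicities $m(y) = \#\left(\phi^{-1}(y)\cap T\right)$, derives $\sum_{y} m(y)^2 = (G:H)\sum_{y} m(y)$, and then invokes the equality case of $m(y)^2 \le (G:H)\,m(y)$ to conclude $m(y) \in \{0,(G:H)\}$. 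Your pointwise argument is shorter and avoids the second-moment and convexity steps entirely; the paper's aggregate argument reaches the same all-or-nothing fiber structure but at the cost of that extra equality-case analysis. (A minor remark: you do not actually need the constancy of the left-hand side on all cosets of $H^\perp$, only its value $1$ on $H^\perp$ itself, but the detour is harmless.) For the second half you essentially reprove the relevant direction of Lemma~\ref{lemma:liftingequiv} --- collapsing the full fibers of $\phi$ and cancelling the factor $(G:H) = |H^\perp|$ in the normalized character sum --- which is exactly how the paper concludes once invariance is established, so that part coincides with the paper's argument; your appeal to the role-reversed form of condition~\eqref{condition1d} is legitimate because formal duality is symmetric, as the paper notes after Definition~\ref{def:main}.
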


Here $H \le G$ means $H$ is a subgroup of $G$.

\begin{proof}
For $y \in \Hhat$, define its multiplicity by
\[
m(y)= \# \left( \phi^{-1} (y) \cap T \right).
\]
Evidently $0 \leq m(y) \leq (G:H)$ for all $y \in \Hhat$. We will
begin by refining this to $m(y)\in\{0, (G:H)\}$.  Recall that for each $x\in
G$,
\begin{equation} \label{eq:reverse}
\left|\sum_{w \in T} \langle x, w \rangle \right|^2 =
\frac{|T|^2}{|S|}\cdot \# \{(v, v') \in S \times S \st  x=v - v' \}.
\end{equation}
Summing this over all $x \in H$, the left side becomes
\[
\sum_{x\in H}\left|\sum_{y\in\Hhat}m(y) \langle x, y \rangle \right|^2
= \sum_{x\in H}\sum_{y, y'\in\Hhat} m(y) m(y') \langle x, y-y' \rangle.
\]
Interchanging the order of summation, we see that this equals
\[
|H|\sum_{y, y'\in\Hhat} m(y) m(y') \delta_{y, y'} = |H|\sum_{y\in\Hhat} m(y)^2.
\]
To simplify the right side of \eqref{eq:reverse} after summing over $x \in H$,
we observe that all differences of the form $v -
v'$ are automatically in $H$. We thus get
\[
|H|\sum_{y\in\Hhat}m(y)^2=|T|^2|S|.
\]
Using $|S||T|=|G|$ and $\sum_{y\in\Hhat}m(y)=|T|$ and canceling $|H|$, we may
rewrite this as
\[
\sum_{y\in\Hhat}m(y)^2=(G:H)\sum_{y\in\Hhat}m(y).
\]
Now for each individual $y\in\Hhat$ we have $m(y)^2 \leq
(G:H)m(y)$, with equality if and only if $m(y)\in\{0, (G:H)\}$. Hence
the previous equation is only possible if this is indeed the case for
all $y\in\Hhat$.

It follows that $T$ is invariant under translation by $H^\perp$, because for each $y$,
$\phi^{-1}(y)$ consists of an $H^\perp$-orbit of size $(G : H)$.  Thus, we
are in the situation covered by Lemma~\ref{lemma:liftingequiv}, and we
conclude that $S \subseteq H$ and $\phi(T) \subseteq \Hhat$ are formally
dual.
\end{proof}

The above results correspond to producing new formally dual configurations
in Euclidean space by taking a smaller underlying lattice. Let us say that
$S$ and $T$ are a \emph{primitive} pair of formally dual configurations if
$S$ is not contained in a coset of a proper subgroup of $G$ and $T$ is not
contained in a coset of a proper subgroup of $\Ghat$. In the classification
of formal duals, we may restrict to the primitive case.

\subsection{The $1$-dimensional case}

When $G$ is cyclic, we conjecture that there are no primitive formally dual
configurations except the trivial example and TITO.
We are able to prove the conjecture when $G = \Z/p^2\Z$, with
$p$ an odd prime.  The same is obviously true for $\Z/p\Z$, since the
product of the sizes of the dual configurations would be $p$.
By contrast, Theorem~\ref{theorem:gauss} shows that there
are nontrivial examples in $(\Z/p\Z)^2$.

\begin{proposition}
  Let $p$ be an odd prime. Then there are no primitive formally dual
  configurations in $G = \Z/p^2\Z$ and its dual.
\end{proposition}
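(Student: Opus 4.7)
My plan is to exploit formal duality via Galois invariance to narrow down the structure of $S$ and $T$ to essentially one case, and then rule out that case by a carry computation in $\Z/p^2\Z$.

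First I would observe that $|S|\cdot|T| = |G| = p^2$, and primitivity eliminates $|S|=1$ or $|T|=1$ (a singleton is a coset of the trivial subgroup $\{0\}$), so $|S|=|T|=p$. Let $H = p\Z/p^2\Z$ denote the unique proper nontrivial subgroup, and let $s(y) = \sum_{v \in S} \langle v, y \rangle$. Condition~\eqref{condition1d} gives $|s(y)|^2 = p\cdot d_T(y) \in p\Z_{\ge 0}$, so $|s(y)|^2$ is rational and hence invariant under the Galois action $\sigma_k(\zeta_{p^2}) = \zeta_{p^2}^k$ for $\gcd(k,p)=1$. Since $\sigma_k s(y) = s(ky)$, this forces $|s(y)|^2$ to be constant on each $(\Z/p^2\Z)^*$-orbit in $\Z/p^2\Z$, of which there are three: $\{0\}$, $H\setminus\{0\}$, and the set of units.

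Combining this rigidity with Parseval yields $p\alpha + \beta = p^2$, where $\alpha,\beta \in p\Z_{\ge 0}$ are the common values of $|s(y)|^2$ on units and on $H\setminus\{0\}$; the only solutions are $(\alpha,\beta) \in \{(0,p^2),(p,0)\}$. The symmetric identity $|t(x)|^2 = p\cdot d_S(x)$ gives the same dichotomy for the autocorrelation $d_S$: either $d_S$ vanishes on units, in which case $S-S \subseteq H$ and $S$ lies in a coset of $H$, contradicting primitivity; or $S$ is a transversal of $H$ in $G$ and every unit of $\Z/p^2\Z$ occurs exactly once as a difference of two elements of $S$. (The same holds for $T$.) It remains to rule out this latter case.

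The main obstacle is this last case, which I would settle by a carry-parity argument. Write each $s_i \in S$ as an integer $\tilde{i} + p\tilde{r}_i$ with $\tilde{i}, \tilde{r}_i \in \{0,1,\ldots,p-1\}$, where $s_i$ is the unique element of $S$ with $s_i \equiv i \pmod{p}$. For each $a \in \{1,\ldots,p-1\}$, the uniqueness of representations of units forces the ``high digits'' $h_a(j) := (s_{j+a}-s_j - a)/p \bmod p$ of the $p$ differences $s_{j+a}-s_j$ (with $j+a$ reduced mod $p$) to form a bijection of $\Z/p\Z$. A short computation gives $h_a(j) \equiv \tilde{r}_{j+a}-\tilde{r}_j - [\tilde{j}+a\ge p] \pmod{p}$. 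Summing over $j$, the $\tilde{r}$-terms telescope to zero while the carry count sums to $a$, yielding $\sum_j h_a(j) \equiv -a \pmod{p}$; but a bijection onto $\Z/p\Z$ has sum $p(p-1)/2 \equiv 0 \pmod{p}$ since $p$ is odd, contradicting $a \in \{1,\ldots,p-1\}$.
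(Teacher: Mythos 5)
Your proof is correct and follows essentially the same route as the paper's: Galois invariance of the rational quantities $|s(y)|^2$ forces them to be constant on the unit orbit, primitivity rules out the degenerate case, one concludes that every unit of $\Z/p^2\Z$ occurs exactly once as a difference of elements of $S$, and a carry/telescoping computation modulo $p$ (using that $p(p-1)/2\equiv 0$ for odd $p$) yields the contradiction. The only cosmetic differences are that you use Parseval in place of the paper's observation that $\sum_{\gcd(y,p)=1} Z_y=0$, and you run the final digit argument for a general residue class $a$ rather than just consecutive differences.
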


\begin{proof}  If such configurations exist, then they must both
have size $p$. Let $S = \{v_1, \dots, v_p\}$ and $T = \{w_1, \dots,
w_p\}$ be formally dual, where we have identified $\widehat{G}$ with $\Z/p^2\Z$
via the pairing $\langle x, y \rangle = \zeta^{xy}$ with $\zeta = e^{2\pi
i/p^2}$.  We assume without loss of generality that $v_1=w_1=0$.

From the first condition of Definition~\ref{def:main}, we obtain
\[
\left| \sum_{i=1}^p \zeta^{yv_i} \right|^2 = p \cdot n_y,
\]
where we set $n_y = \# \{ (j,k) \st w_j - w_k = y\}$. That is,
\[
p + \sum_{i\neq j} \zeta^{y(v_i - v_j)} = p \cdot n_y.
\]
So $Z_y := \sum_{i \ne j} \zeta^{y(v_i - v_j)}$ is the rational integer
$p(n_y - 1)$. Now, note that as $y$ ranges over all the numbers modulo $p^2$
that are coprime to $p$, the algebraic numbers $Z_y$ are all conjugates of
each other. Since they are integers, they are all equal, and so are the
numbers $n_y$. Furthermore, we cannot have $n_y=0$ for all $y$ coprime to
$p$; otherwise all of $w_1,\dots,w_p$ would be multiples of $p$ (since $w_1 =
0$) and $T$ would be contained in a subgroup. Thus $n_y \geq 1$, and $Z_y
\geq 0$. But their sum
\[
\sum_{\gcd (y,p) =1} \sum_{i \neq j} \zeta^{y(v_i - v_j)} =
\sum_{i \neq j} \sum_{\gcd (y,p) =1} \zeta^{y(v_i - v_j)}
\]
equals zero, because the inner sum is zero for every pair $i \neq j$. (This
follows from $\sum_{j=1}^{p^2} \zeta^j = 0$ and $\sum_{j=1}^p \zeta^{pj} =
0$.) Therefore $n_y = 1$ for all $y$, which means the differences $w_i - w_j$
for $i \neq j$ cover all the $p(p-1)$ elements modulo $p^2$ that are coprime to $p$
exactly once. This is impossible by the following lemma, so we get a
contradiction.
\end{proof}

\begin{lemma} \label{lemma:psquareddiffsets}  Let $p$ be an odd prime.  Then
  there is no subset $S$ of $\Z/p^2 \Z$ whose difference set
  $\{x - y \st x, y \in S,\ x \neq y \}$ is the set of elements coprime
  to $p$.
\end{lemma}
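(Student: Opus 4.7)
The plan is to derive a contradiction by summing certain carefully chosen differences inside $\Z/p^2\Z$ in two different ways.

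The first step is to extract the structural consequences of the hypothesis. Any difference $x-y$ with $x\neq y$ in $S$ is coprime to $p$, so it is nonzero modulo $p$; hence distinct elements of $S$ occupy distinct residue classes mod $p$, which forces $|S|\leq p$. Comparing the $|S|(|S|-1)$ available ordered differences against the $p(p-1)$ units forces $|S|=p$, with each unit mod $p^2$ arising exactly once as a difference. I would then label the elements of $S$ as $s_0,s_1,\dots,s_{p-1}$ with $s_i\equiv i\pmod{p}$.

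Next, fix a nonzero $a\in\Z/p\Z$ and consider the $p$ ordered pairs $(s_i,s_{i-a})$ with indices taken modulo $p$. Each difference $s_i-s_{i-a}$ reduces to $a$ modulo $p$, and by the uniqueness established above these $p$ differences are distinct, so they exhaust the fiber $\{a+pc\st c\in\Z/p\Z\}$ of the projection $\Z/p^2\Z\to\Z/p\Z$ over $a$.

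The final step is to evaluate $\sum_{i\in\Z/p\Z}(s_i-s_{i-a})$ in $\Z/p^2\Z$ in two different ways. Reindexing via the bijection $i\mapsto i-a$ on $\Z/p\Z$ makes the sum telescope to $0$. On the other hand, summing over the fiber gives $pa+p^2(p-1)/2$, which reduces to $pa$ modulo $p^2$ since $p$ is odd, so that $(p-1)/2$ is an integer. Comparing the two values forces $pa\equiv 0\pmod{p^2}$, hence $a\equiv 0\pmod{p}$, contradicting the choice of $a$.

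I do not anticipate a serious obstacle: the only subtle point is recognizing that evaluating the sum modulo $p^2$ (rather than merely modulo $p$) retains enough information to produce an obstruction, and that the oddness of $p$ is essential because it ensures the ``correction term'' $p^2(p-1)/2$ vanishes modulo $p^2$.
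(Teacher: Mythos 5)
Your proof is correct and is essentially the paper's argument: both fix a residue class mod $p$, observe that the $p$ consecutive differences $s_i - s_{i-a}$ landing in that fiber must be distinct and hence exhaust it, and then compare the telescoping sum (which vanishes) with the fiber sum, using that $\sum_{c=0}^{p-1} c \equiv 0 \pmod p$ for odd $p$. The only cosmetic difference is that the paper takes $a=1$ and sums the coefficients of $p$ rather than the differences themselves.
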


\begin{proof}
Assume there is such a set $S$. Then the elements of $S$ must be distinct
modulo $p$, since otherwise some difference would be a multiple of $p$.
Without loss of generality $0 \in S$, since we can translate $S$ arbitrarily.
We list the elements as
\[ % spacing?
x_0 = 0, \quad x_1 = 1 + a_1 p, \quad  x_2 = 2 + a_2 p,\quad  \dots, \quad x_{p-1} = (p-1) + a_{p-1} p,
\]
where the integers $a_i$ are well defined modulo $p$. Now, among the
differences, the numbers congruent to $1$ modulo $p$ are
\begin{align*}
x_1 - x_0 &= 1 + a_1p, \\
x_2 - x_1 &=  1 + (a_2 - a_1) p, \\
&\vdots\\
x_{p-1} - x_{p-2} &= 1 + (a_{p-1} - a_{p-2})p, \\
x_0 - x_{p-1} &= p^2 - (p-1) - a_{p-1} p = 1 + (p-1 - a_{p-1}) p.
\end{align*}
Since these differences are all distinct modulo $p^2$, we need $a_1, a_2 -
a_1, \dots, p-1 - a_{p-1}$ to be distinct modulo $p$. Taking their
(telescoping) sum, we get
\[
p - 1 \equiv  0 + 1 + \dots + (p-1) = \frac{p(p-1)}{2} \pmod p,
\]
which is impossible for odd $p$.
\end{proof}

We thank Gregory Minton for providing the above short proof of the lemma.

\section{Non-existence of some formal duals} \label{section:nonexistence}

In this section, we show that some well-known packings do not have
formal duals.

\subsection{Barlow packings}

Recall that the Kepler conjecture was settled by Hales, \cite{H}
based partially on his work with Ferguson \cite{HF}. As a
result, the face-centered cubic lattice $A_3$ gives a densest
sphere packing in $\R^3$. It has uncountably many equally dense
competitors, the Barlow packings, obtained by layering the
densest planar arrangement (i.e., the hexagonal lattice $A_2$)
in different ways. The periodic packings among them are the
only periodic packings of maximal density in $\R^3$.  The
face-centered cubic lattice has a formal dual, namely its dual
lattice, and it is natural to ask whether the other periodic
Barlow packings have formal duals.
Proposition~\ref{prop:barlow} shows that they do not.

The periodic Barlow packings can be constructed as follows (see
\cite{CS1} for more details and a geometric description).  Let
$k$ be the number of hexagonal layers in a period. The $A_2$
lattice is spanned by two unit vectors $v_1$ and $v_2$, making
an angle of $\pi/3$, and the underlying lattice $\Lambda$ of
the Barlow packing is spanned by $v_1$, $v_2$, and $v_3$,
where $v_3$ is a vector of length $k\sqrt{2/3}$ that is
orthogonal to $v_1$ and $v_2$.  In addition to this lattice, we
need to specify how much each layer is offset. Let
$a_0,\dots,a_{k-1}$ be elements of $\{0,1,2\}$ with $a_j \ne
a_{j+1}$ for all $j$ (where we interpret indices modulo $k$).
Then the entire periodic configuration consists of the cosets
of $\Lambda$ by the translation vectors $a_j (v_1+v_2)/3 + j
v_3/k$.

\begin{proposition} \label{prop:barlow}
The only periodic Barlow packing that has a formal dual is the face-centered
cubic lattice.
\end{proposition}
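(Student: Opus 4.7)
The plan is to invoke Theorem~\ref{theorem:main}, reformulate the Barlow packing as a subset of a finite abelian group, and analyze character sums. The underlying lattice $\La$ of a periodic Barlow packing sits inside $A_2^* \oplus (1/k)\Z v_3$, and modulo $\La$ this quotient is $G_0 := \Z/3\Z \times \Z/k\Z$, in which the translations of $\sP$ are represented by $S = \{(a_j, j) : 0 \le j < k\}$. If the consecutive differences $a_j - a_{j-1} \in \{\pm 1\} \pmod 3$ are all equal, then $S - S$ generates only the diagonal cyclic subgroup of order $k$; in that case $\sP$ is (up to orientation) the $A_3$ lattice and its dual is a formal dual---the fcc case. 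Otherwise both $(1,1)$ and $(-1,1)$ lie in $S - S$, generating $G_0$, so by Lemmas~\ref{lemma:liftingequiv} and~\ref{prim} we may take $G = G_0$, giving $|T| = 3$.

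Next I apply the first criterion of Definition~\ref{def:main}. For each character $(0,d) \in \Ghat$ with $d \ne 0$, the sum $\sum_j \zeta_k^{dj}$ vanishes, so $n_{(0,d)} = 0$, which forces $T = \{(0, b_0), (1, b_1), (2, b_2)\}$ with distinct first coordinates. For $\chi = (1,0)$, the quantity $|F_1(0)|^2 = \tfrac{1}{2}\sum_{i<j}(n_i - n_j)^2$ (where $n_i = \#\{j : a_j = i\}$) is a nonnegative integer that must equal $k^2 n/3$ for some $n \in \{0,1,2\}$, since $n = 3$ would force all $a_j$ equal. This forces $3 \mid k$; write $k = 3m$.

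The crucial step sets $g(j) := \zeta_3^{a_j}$ and uses the identity $\bar g = g^2$, which follows from $|g| = 1$ together with $g^3 = 1$. Taking Fourier transforms yields
\[
  \widehat{g^2}(d) \;=\; \overline{\widehat{g}(-d)} \qquad \text{for every } d \in \Z/k\Z.
\]
Parseval combined with the constraint $|\widehat{g}(d)|^2 \in (k^2/3)\Z_{\ge 0}$ forces $|E| \le 3$, where $E := \operatorname{supp}(\widehat{g})$. The case $|E| = 1$ makes $g$ constant, impossible. The case $|E| = 2$ forces (by $|g|=1$) $k$ to be even and $g$ to alternate between two values in $\{1, \zeta_3, \zeta_3^2\}$; a direct computation then gives $|F_1(0)|^2 = k^2/4$, which violates the constraint. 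For $|E| = 3$, one has $e_1 + e_2 + e_3 = 0$; evaluating the convolution identity at $d = 2 e_i$ forces the contribution $\widehat{g}(e_i)^2$ to be matched by the right-hand side, which is nonzero only when $-2 e_i \in E$. Combined with $e_1 + e_2 + e_3 = 0$ and the distinctness of the $e_i$, this forces $3 e_i = 0$ for every $i$, so $E = \{0, k/3, 2k/3\}$. Then $g$ has period $3$, and the cyclic-adjacency condition $a_j \ne a_{j+1}$ forces $(a_0, a_1, a_2)$ to be a cyclic permutation of $(0, 1, 2)$---the fcc case, contradicting the non-fcc assumption.

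The main obstacle is the $|E| = 3$ analysis: one must use the convolution identity $\widehat{g^2}(d) = \overline{\widehat{g}(-d)}$ systematically to pin down the frequency constraint, carefully tracking which cross terms $\widehat{g}(e_j) \widehat{g}(e_l)$ can combine with $\widehat{g}(e_i)^2$ at $d = 2 e_i$ and ruling out exceptional configurations (such as $e_j - e_i = k/2$ with $k$ even) that might otherwise evade the conclusion $3 e_i = 0$.
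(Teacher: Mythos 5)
Your overall strategy is genuinely different from the paper's: you bound the Fourier support of $g(j)=\zeta_3^{a_j}$ via Parseval plus the integrality constraint $|\widehat g(d)|^2\in\tfrac{k^2}{3}\Z_{\ge 0}$ (equivalently, $\sum_d n_{(1,d)}=3$, so at most three frequencies survive), and then try to classify the possible supports directly. The paper instead applies Galois automorphisms of $\Q(\zeta')$ to multiply the nonvanishing frequencies by units, deduces $\varphi\bigl(k/\gcd(k,s)\bigr)\le 6$ and hence $k\le 18$ after passing to a period, and finishes with a finite computer check. Your route is appealing because it would eliminate the computation, and several of your steps are sound ($3\mid k$; $T$ meets each first coordinate once; $|E|\le 3$; and $e_1+e_2+e_3=0$ when $|E|=3$, since the three support frequencies are then exactly the second coordinates of the three differences $(1,b_1-b_0)$, $(1,b_2-b_1)$, $(1,b_0-b_2)$, which sum to zero --- though you should say this, as it does not follow from $g$ alone). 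However, as written the case analysis has real gaps.

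First, $|E|=1$ does not make $g$ constant: it makes $g$ a constant times a character, and the $\mu_3$-valuedness only forces $e\in\{0,\pm k/3\}$; the nonzero options are exactly the fcc sequence, so you need to invoke the non-fcc hypothesis here, not constancy. Second, in the $|E|=2$ case, $|g|\equiv 1$ forces $e_1-e_2=k/2$ but does \emph{not} force $g$ to have period $2$: for instance $E=\{k/6,4k/6\}$ yields period-$6$ functions $g(j)=\zeta^{e_2 j}\bigl(c_2+c_1(-1)^j\bigr)$ with $\widehat g(0)=0$, so your computation $|F_1(0)|^2=k^2/4$ says nothing about them. (These subcases do die --- the adjacency condition $a_j\ne a_{j+1}$ forces them to degenerate to fcc, i.e.\ to $|E|=1$ --- but that argument is missing.) Third, and most importantly, the $|E|=3$ case is not closed: the convolution identity at $d=2e_i$ only gives $\widehat g(e_i)^2$ as an isolated term when no cross term $\widehat g(e_j)\widehat g(e_l)$ with $e_j+e_l=2e_i$ exists, and you explicitly defer ruling out the configurations (e.g.\ $e_j=e_i+k/2$) where such cancellation could occur. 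Since you yourself label this ``the main obstacle,'' the proof is not complete at its decisive step. One further small point: once you reach $E=\{0,k/3,2k/3\}$, periodicity mod $3$ plus adjacency forces $(a_0,a_1,a_2)$ to be a permutation of $(0,1,2)$, whence $\widehat g$ is supported at a \emph{single} frequency --- so the contradiction is with $|E|=3$, not merely with the non-fcc assumption. In short: a promising alternative outline with correct openings, but the $|E|=2$ and especially $|E|=3$ cases need to be completed before this replaces the paper's Galois-plus-enumeration argument.
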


The face-centered cubic case is when $k$ is a multiple of $3$, the sequence
$a_0,\dots,a_{k-1}$ is periodic modulo $3$, and $\{a_0,a_1,a_2\} =
\{0,1,2\}$.

For the proof of Proposition~\ref{prop:barlow}, consider any periodic Barlow
packing, with the notation established above. Transforming to the setting of
abelian groups and applying Lemma~\ref{prim}, we can let $G$ be the group
$\Z/3\Z \times \Z/k\Z$ (generated by $(v_1+v_2)/3$ and $v_3/k$ modulo
$\Lambda$).  The question becomes whether the subset $S = \{(a_j,j) \st 0 \le
j < k\}$ of $G$ has a formal dual.  We identify the dual group $\widehat{G}$
with $\Z/3\Z \times \Z/k\Z$ via the pairing
\[
\langle (a,b),(c,d) \rangle = \omega^{ac}\zeta^{bd},
\]
where $\omega = e^{2\pi i/3}$ and $\zeta = e^{2\pi i/k}$.

\begin{lemma}
If $S$ has a formal dual, then $k$ is a multiple of $3$.
\end{lemma}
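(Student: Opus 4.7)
The plan is to extract a necessary arithmetic condition from formal duality by evaluating condition \eqref{condition1d} of Definition~\ref{def:main} at a single well-chosen character of $G$. Since $|G| = 3k$ and $|S| = k$, any formal dual $T$ must satisfy $|T| = 3$; consequently, for every $y \in \widehat{G}$ the number $\#\{(w,w') \in T \times T : w - w' = y\}$ is a non-negative integer bounded above by $3$. This discretizes the right-hand side of \eqref{condition1d} to the values $\{0,\tfrac13,\tfrac23,1\}$, and we will exploit this together with an explicit computation of the left-hand side.

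Specifically, I would apply \eqref{condition1d} at $y = (1,0) \in \widehat{G}$, for which $\langle (a_j,j),(1,0)\rangle = \omega^{a_j}$. Letting $n_i = \#\{0 \le j < k : a_j = i\}$ for $i \in \{0,1,2\}$, we have $n_0+n_1+n_2 = k$, and a short calculation (using $1 + \omega + \omega^2 = 0$) yields
\[
\left|\sum_{j=0}^{k-1} \omega^{a_j}\right|^2 \;=\; |n_0 + n_1\omega + n_2\omega^2|^2 \;=\; k^2 - 3B,
\]
where $B = n_0 n_1 + n_1 n_2 + n_0 n_2$. The formal duality relation therefore forces $k^2 - 3B = (m/3)\,k^2$ for some integer $m \in \{0,1,2,3\}$, equivalently
\[
(3-m)\,k^2 \;=\; 9\,B.
\]

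Finally, I would run the case analysis on $m$. The extreme case $m = 3$ gives $B = 0$, which means only one of the $n_i$ is nonzero, so all $a_j$ are equal; this contradicts the Barlow constraint $a_j \ne a_{j+1}$. In each of the remaining cases $m \in \{0,1,2\}$ the coefficient $3-m \in \{1,2,3\}$ is coprime to $9$ except for a single factor of $3$ when $m=0$, so in every case the relation $(3-m)k^2 = 9B$ forces $9 \mid k^2$ (for $m=0$ the weaker $3 \mid k^2$ suffices), and hence $3 \mid k$, as desired.

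There is no real obstacle here: the argument is a one-character test, and the only mild subtlety is confirming that $3 \mid k$ falls out of \emph{every} admissible value of $m$, which is where the Barlow non-degeneracy condition $a_j \ne a_{j+1}$ is needed to rule out the case $m = 3$. The same framework (testing \eqref{condition1d} at specific $y$) will presumably underpin the stronger structural statements needed later to complete the proof of Proposition~\ref{prop:barlow}.
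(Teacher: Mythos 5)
Your proof is correct and follows essentially the same route as the paper: both evaluate the formal duality criterion at the single character $y=(1,0)$, observe that $\tfrac{3}{k^2}\bigl|\sum_j \omega^{a_j}\bigr|^2$ must be an integer $m\in\{0,1,2,3\}$, and rule out $m=3$ via the constraint $a_j\ne a_{j+1}$. The only difference is cosmetic: your explicit norm computation $\bigl|n_0+n_1\omega+n_2\omega^2\bigr|^2=k^2-3B$ lets you treat all remaining cases by elementary divisibility, where the paper instead invokes algebraic integrality for $m\in\{1,2\}$ and a separate polynomial-divisibility argument for $m=0$.
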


\begin{proof}
Let $T$ be a formal dual of $S$, which must have $|T|=3$.  If we take
$y=(1,0)$ in
\[
\frac{|T|}{|S|^2} \left| \sum_{v \in S} \langle y,v \rangle
\right|^2 = \#\{(w,w') \in T : y = w-w'\},
\]
we find that
\[
\frac{3}{k^2} \left| \sum_{j=0}^{k-1} \omega^{a_j}
\right|^2
\]
is an integer, which must be $0$, $1$, $2$, or $3$.  It cannot be $3$ because
$a_0,\dots,a_{k-1}$ are not all equal.  If it is $1$ or $2$, then $\left|
\sum_{j=0}^{k-1} \omega^{a_j} \right|^2$ is $k^2/3$ or $2k^2/3$ and is also
an algebraic integer, so $k$ must be divisible by $3$.  Finally, if
$\sum_{j=0}^{k-1} \omega^{a_j}=0$, then the polynomial $\sum_{j=0}^{k-1}
x^{a_j}$ is divisible by $1+x+x^2$ in $\Z[x]$, and setting $x=1$ shows that
$k$ is a multiple of $3$.
\end{proof}

For the remainder of the proof of Proposition~\ref{prop:barlow}, suppose $S$
does have a formal dual $T$.  Because $k$ is a multiple of $3$, we can
replace $\omega$ with $\zeta^{k/3}$ and write the condition for formal
duality with $y=(r,s)$ as
\[
\frac{3}{k^2} \left| \sum_{j=0}^{k-1} \zeta^{ra_jk/3 + sj}
\right|^2 = \#\{(w,w') \in T \times T : (r,s) = w-w'\}.
\]

Without loss of generality, we can let $T = \{0, t_1, t_2\}$.  There are at
most six nonzero differences of elements of $T$, namely $\pm t_1$, $\pm t_2$,
and $\pm (t_1-t_2)$.  Thus, there can be at most six nonzero vectors $(r,s)$
for which
\[
\sum_{j=0}^{k-1} \zeta^{ra_jk/3 + sj} \ne 0.
\]

Our next step is to show that whenever $(r,s)$ satisfies
$\sum_j \zeta^{ra_jk/3 + sj} \ne 0$, its second coordinate $s$
must have a large factor in common with $k$.  For example, in
the face-centered cubic case $s$ is always divisible by $k/3$,
and this divisibility corresponds to the periodicity of
$a_0,\dots,a_{k-1}$ modulo $3$.

Let $m=k/3$, and write $\sum_j \zeta^{r a_j m + sj}$ in terms
of $\zeta' := \zeta^{\gcd(m,s)}$, which is a primitive root of
unity of order $k/\gcd(m,s)$.  The automorphisms of
$\Q(\zeta')$ are given by $\zeta' \mapsto \big(\zeta'\big)^u$
with $u$ a unit modulo $k/\gcd(m,s)$. These maps preserve
whether $\sum_j \zeta^{r a_j m + sj}$ vanishes, and they amount
to multiplying $y=(r,s)$ by $u$. Note that $us=u's$ in $\Z/k\Z$
iff $u \equiv u' \pmod{k/\gcd(k,s)}$, and $us \ne 0$ if $s \ne
0$.

\begin{lemma} \label{lemma:easydiv}
Given positive integers $a$ and $b$ with $a$ dividing $b$,
there exist $\varphi(a)$ units modulo $b$ that are distinct
modulo $a$.
\end{lemma}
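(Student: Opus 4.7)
The plan is to prove the slightly stronger statement that the natural reduction map $(\Z/b\Z)^* \to (\Z/a\Z)^*$ (well defined because $a$ divides $b$, and landing in the units because a unit mod $b$ is coprime to every prime dividing $b$, hence to every prime dividing $a$) is surjective. Granting this, one simply picks a preimage for each of the $\varphi(a)$ units modulo $a$, and the resulting $\varphi(a)$ elements of $(\Z/b\Z)^*$ are by construction pairwise distinct modulo $a$, which is exactly the lemma.

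For the surjectivity I would use the Chinese Remainder Theorem. Write the prime factorizations $b = \prod_p p^{e_p}$ and $a = \prod_p p^{f_p}$ with $0 \le f_p \le e_p$. Then $(\Z/b\Z)^* \cong \prod_p (\Z/p^{e_p}\Z)^*$ and $(\Z/a\Z)^* \cong \prod_p (\Z/p^{f_p}\Z)^*$, and the reduction map is the product of the maps $(\Z/p^{e_p}\Z)^* \to (\Z/p^{f_p}\Z)^*$. Each factor map is surjective: it is the trivial map to a trivial group when $f_p = 0$, and otherwise it is the standard reduction of units between prime-power rings, whose surjectivity is a well-known elementary fact (any integer coprime to $p$ is already coprime to every power of $p$).

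A more hands-on alternative, which avoids invoking the prime-power fact, would be to argue directly: given $u$ with $\gcd(u,a) = 1$, every $\tilde u = u + ak$ automatically satisfies $\gcd(\tilde u, a) = 1$, so the only primes that could divide $\gcd(\tilde u, b)$ are those dividing $b$ but not $a$. For each such prime $p$, invertibility of $a$ modulo $p$ makes $\tilde u \equiv 0 \pmod p$ a single forbidden residue class for $k$ modulo $p$, and CRT (over the finitely many such $p$) lets me pick $k$ avoiding all forbidden classes simultaneously. Either route is routine; I do not expect any genuine obstacle, as the statement is essentially the standard fact that reduction between unit groups of nested moduli is surjective.
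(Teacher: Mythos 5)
Your proposal is correct and follows essentially the same route as the paper: the paper itself frames the lemma as surjectivity of the restriction map from $(\Z/b\Z)^\times$ to $(\Z/a\Z)^\times$ and proves it by a Chinese remainder theorem lift (splitting $b/a$ into the part sharing primes with $a$ and the part coprime to $a$), which is just a coarser version of your prime-power decomposition. No gaps; nothing further needed.
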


Here $\varphi$ denotes the Euler totient function.
Lemma~\ref{lemma:easydiv} amounts to the standard fact that the
restriction map from $(\Z/b\Z)^\times$ to $(\Z/a\Z)^\times$ is
surjective; we will provide a proof for completeness.

\begin{proof}
Factor $b/a$ as $b'a'$, where $a'$ contains all the prime
factors that also divide $a$ and $b'$ contains all those that
do not.  Then units modulo $a$ are also units modulo $aa'$, and
we can use the Chinese remainder theorem to lift them to values
that are $1$ modulo $b'$ and the same modulo $aa'$.  The result
is $\varphi(a)$ units modulo $b$ that are distinct modulo $a$.
\end{proof}

Given a nonzero element $(r,s)$ for which $\sum_j \zeta^{r a_j
m + sj} \ne 0$, we can now apply Lemma~\ref{lemma:easydiv} with
$a = k/\gcd(k,s)$ and $b = k/\gcd(m,s)$ to find at least
$\varphi\big(k/\gcd(k,s)\big)$ distinct, nonzero elements
$(ur,us)$ of $\Z/3\Z \times \Z/k\Z$ such that $\sum_j \zeta^{ur
a_j m + usj} \ne 0$.  Thus, $\varphi\big(k/\gcd(k,s)\big) \le
6$, which implies $k/\gcd(k,s) \le 18$.

If $\varphi\big(k/\gcd(k,s)\big) \ge 3$ for some nonzero $(r,s) \in T-T$,
then at least three elements of $\{\pm t_1, \pm t_2, \pm (t_1-t_2)\}$ have
the same value of $\gcd(k,s)$ for their second coordinate $s$.  Call this
common value $g$.  It follows from the pigeonhole principle that $g$ divides
the second coordinate of at least two of $t_1$, $t_2$, and $t_1-t_2$, and
hence all three of them.  Therefore every element of $T$ has second
coordinate a multiple of $g$, and $k/g \le 18$.

The other possibility is that $\varphi\big(k/\gcd(k,s)\big) \le 2$ for all
nonzero $(r,s) \in T-T$.  Then $k/\gcd(k,s) \in \{1,2,3,4,6\}$ for all such
$(r,s)$, and the least common multiple of these numbers is $12$.  Letting $g$
be the greatest common divisor of $\gcd(k,s)$ for all nonzero $(r,s) \in
T-T$, we find that every element of $T$ has second coordinate a multiple of
$g$, with $k/g \le 12$.

Thus, in every case $T$ is contained in the subgroup of $\Z/3\Z
\times \Z/k\Z$ generated by $(1,0)$ and $(0,g)$, for some $g$
with $k/g \le 18$. By Lemma~\ref{prim}, $S$ must be invariant
under the annihilator of this subgroup, which is generated by
$(0,k/g)$. In other words, the layers in the Barlow packing are
periodic modulo $k/g$, where $k/g \le 18$.

This means we can assume without loss of generality that there
are at most $18$ layers (i.e., $k \le 18$).  Furthermore, we
can assume $a_0=0$ and $a_1=1$.  Then there are few enough
possibilities to enumerate them by computer, and one can check
that the integrality conditions
\[
\frac{3}{k^2} \left| \sum_{j=0}^{k-1} \zeta^{ra_jk/3 + sj}
\right|^2 \in \Z
\]
rule out all cases except the face-centered cubic lattice. This
completes the proof of Proposition~\ref{prop:barlow}.

\subsection{The Best packing in $\R^{10}$} \label{subsec:Best}

The Best packing is the densest known packing in $\R^{10}$. It
is a periodic configuration, consisting of $40$ translates of a
lattice. It can be constructed as the subset of $\Z^{10}$ that
reduces modulo $2$ to the nonlinear $(10,40,4)$ Best binary
code (see \cite[p.~140]{CS2}).

\begin{proposition} \label{prop:Best}
The Best configuration does not have a formal dual.
\end{proposition}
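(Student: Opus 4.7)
The plan is to transform the problem into the finite abelian group framework of Theorem~\ref{theorem:main} and then derive a contradiction from a simple divisibility obstruction. Suppose for contradiction that $\sQ$ is a formal dual of the Best packing $\sP$. Since formal duality is translation-invariant, I may assume $0\in\sP\cap\sQ$. The underlying lattice of $\sP$ is $\Lambda=2\Z^{10}$, and writing $\Gamma$ for the underlying lattice of $\sQ$, Theorem~\ref{theorem:main} produces a formally dual pair $S\subseteq G$ and $T\subseteq\widehat{G}$, where $G=\Gamma^*/\Lambda$ and $|S|=40$.

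Next, I would pin down the subgroup $H\le G$ generated by $S$. Since $\sP$ is a union of cosets of $2\Z^{10}$ indexed by a set of lifts $C\subseteq\Z^{10}$ of the $40$ codewords of the Best code, every element of $S$ has the form $c+\Lambda$ with $c\in\Z^{10}$. Hence
\[
H=\bigl(\langle C\rangle_{\Z}+\Lambda\bigr)/\Lambda,
\]
and the numerator is a sublattice of $\Z^{10}$ that contains $\Lambda=2\Z^{10}$. Consequently $H$ is canonically isomorphic to a subgroup of $\Z^{10}/2\Z^{10}\cong(\Z/2\Z)^{10}$, so $|H|$ is a power of two.

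Finally, I would invoke Lemma~\ref{prim} with the inclusion $S\subseteq H\le G$: it outputs a formally dual pair $(S,\phi(T))$ inside $(H,\widehat{H})$, and the cardinality computation at the end of the proof of Theorem~\ref{theorem:main} (applied in $H$) forces $|S|\cdot|\phi(T)|=|H|$. Thus $40$ must divide the power of two $|H|$, which is impossible because $40=2^{3}\cdot 5$ contains the prime $5$. This contradiction completes the proof. I expect no serious obstacle here: the only nontrivial point is the identification of $H$ with a subgroup of $(\Z/2\Z)^{10}$, which is immediate from the observation that the lifts of Best codewords live in $\Z^{10}$.
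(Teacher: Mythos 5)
Your proof is correct and is essentially the paper's argument: reduce via Theorem~\ref{theorem:main} and Lemma~\ref{prim} to a formally dual pair inside the $2$-group generated by the (lifted) Best code, and observe that $40=2^3\cdot 5$ cannot divide a power of $2$. The paper states this more tersely (taking $G=(\Z/2\Z)^{10}$ directly and noting $40\nmid 1024$), while you carefully justify why the ambient group may be taken to be elementary abelian of exponent $2$; the substance is the same.
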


\begin{proof}
Again applying Lemma~\ref{prim}, we can assume $G =
(\Z/2\Z)^{10}$ and $S \subseteq G$ is the Best code. Since $|S|
= 40$ does not divide $|G| = 1024$, there cannot be a formal
dual.
\end{proof}

It remains an open question whether the Best packing has a radial formal dual
\cite[p.~185]{CS2}.  It seems unlikely that it has one, but radial formal
duality does not support the sort of structural
analysis we have used to prove Proposition~\ref{prop:Best}.

\section{Open questions}

We conclude with some open questions about formal duality.  Formal duality initially
arose in the simulations described in \cite{CKS}, and its occurrence there
remains unexplained: although our results in this paper substantially clarify
the algebraic foundations of this duality theory, they give no conceptual
explanation of why periodic energy minimization ground states in low
dimensions seem to exhibit formal duality. That is the most puzzling aspect
of the theory.

It would be interesting to classify all formally dual pairs.  Is every
example derived from the trivial construction, TITO, and the Gauss sum
construction by taking products and inflating the group (as in
Lemma~\ref{lemma:liftingequiv})?

TITO feels like a characteristic two relative of the Gauss sum construction,
but it occurs in $\Z/4\Z$ rather than $(\Z/2\Z)^2$.  Is there a unified
construction that subsumes TITO and the Gauss sum cases?

Conway and Sloane have given a conjectural list of all the ``tight'' packings
in up to nine dimensions \cite{CS1}.  Their list is believed to include all the
densest periodic packings in these dimensions.  Can one analyze which ones have formal duals,
perhaps by adapting the proof of Proposition~\ref{prop:barlow}?  Note that
the list contains at least a few non-lattice packings with formal duals,
namely $\Lambda_5^2$, $\Lambda_6^2$, and $\Lambda_7^3$, as shown in
\cite{CKS}.

\section*{Acknowledgments}

We thank Noam Elkies, Gregory Minton, and Peter Sarnak for helpful
conversations.

\bibliographystyle{amsalpha}

\end{document}